\def\ssign{\textsection\nobreak\hspace{1pt plus 0.3pt}}
\let\origsection=\section 
\def\mysection{\@mystartsection{section}{1}\z@{.7\linespacing\@plus\linespacing}{.5\linespacing}{\normalfont\scshape\centering\ssign}}
\def\section{\@ifstar{\origsection*}{\mysection}}
\def\appendix{\par\c@section\z@ \c@subsection\z@
   \let\sectionname\appendixname
   \let\section=\origsection 
   \def\thesection{\@Alph\c@section}} 
\def\@mystartsection#1#2#3#4#5#6{\if@noskipsec \leavevmode \fi 
 \par \@tempskipa #4\relax
 \@afterindenttrue
 \ifdim \@tempskipa <\z@ \@tempskipa -\@tempskipa \@afterindentfalse\fi
 \if@nobreak \everypar{}\else
     \addpenalty\@secpenalty\addvspace\@tempskipa\fi
 \@dblarg{\@mysect{#1}{#2}{#3}{#4}{#5}{#6}}}
\def\@mysect#1#2#3#4#5#6[#7]#8{\edef\@toclevel{\ifnum#2=\@m 0\else\number#2\fi}\ifnum #2>\c@secnumdepth \let\@secnumber\@empty
  \else \@xp\let\@xp\@secnumber\csname the#1\endcsname\fi
  \@tempskipa #5\relax
  \ifnum #2>\c@secnumdepth
    \let\@svsec\@empty
  \else
    \refstepcounter{#1}\edef\@secnumpunct{\ifdim\@tempskipa>\z@ \@ifnotempty{#8}{\@nx\enspace}\else
        \@ifempty{#8}{.}{\@nx\enspace}\fi
    }\@ifempty{#8}{\ifnum #2=\tw@ \def\@secnumfont{\bfseries}\fi}{}\protected@edef\@svsec{\ifnum#2<\@m
        \@ifundefined{#1name}{}{\ignorespaces\csname #1name\endcsname\space
        }\fi
      \@seccntformat{#1}}\fi
  \ifdim \@tempskipa>\z@ \begingroup #6\relax
    \@hangfrom{\hskip #3\relax\@svsec}{\interlinepenalty\@M #8\par}\endgroup
    \ifnum#2>\@m \else \@tocwrite{#1}{#8}\fi
  \else
  \def\@svsechd{#6\hskip #3\@svsec
    \@ifnotempty{#8}{\ignorespaces#8\unskip
       \@addpunct.}\ifnum#2>\@m \else \@tocwrite{#1}{#8}\fi
  }\fi
  \global\@nobreaktrue
  \@xsect{#5}}
\renewcommand{\PrintDOI}[1]{\doi{#1}}
\numberwithin{equation}{section}
\numberwithin{figure}{section}
\def\rmlabel{\upshape({\itshape \roman*\,})}
\def\alabel{\upshape({\itshape \alph*\,})}
\def\ag#1{	\tikz{\def\nn{#1};
	\pgfmathsetmacro\wie{3*\nn-1};
	\pgfmathsetmacro\mm{5*\nn};
	\pgfmathsetmacro\kk{\nn-1};
	
	\foreach \i in {0,...,\mm}{
		\coordinate (x\i) at (90+\i*360/\wie:1.3cm);}
		
	\foreach \i in {0,...,\wie}{	
		\foreach \j [evaluate=\j as \k using \i+\j+\nn] in {0,...,\kk}{
			\draw[green!75!black] (x\i)--(x\k);}
	}		
	
	\foreach \i in {0,...,\wie}{	
			\draw[black, fill=black] (x\i) circle (1pt);
	}		}}
\newcommand{\mref}[1]{\ifmmode\textrm{\ref{#1}}\else\ref{#1}\fi}
\let\polishlcross=\l
\def\l{\ifmmode\ell\else\polishlcross\fi}
\def\tand{\ \text{and}\ }
\let\vn=\vn
\let\setminus=\smallsetminus
\let\sm=\smallsetminus
\let\wt=\widetilde
\def\moverlay{\mathpalette\mov@rlay}
\def\mov@rlay#1#2{\leavevmode\vtop{   \baselineskip\z@skip \lineskiplimit-\maxdimen
   \ialign{\hfil$\m@th#1##$\hfil\cr#2\crcr}}}
\newcommand{\charfusion}[3][\mathord]{
    #1{\ifx#1\mathop\vphantom{#2}\fi
        \mathpalette\mov@rlay{#2\cr#3}
      }
    \ifx#1\mathop\expandafter\displaylimits\fi}
\newcommand{\dcup}{\charfusion[\mathbin]{\cup}{\cdot}}
\newcommand{\bigdcup}{\charfusion[\mathop]{\bigcup}{\cdot}}
\DeclareFontFamily{U}  {MnSymbolC}{}
\DeclareSymbolFont{MnSyC}         {U}  {MnSymbolC}{m}{n}
\DeclareFontShape{U}{MnSymbolC}{m}{n}{
    <-6>  MnSymbolC5
   <6-7>  MnSymbolC6
   <7-8>  MnSymbolC7
   <8-9>  MnSymbolC8
   <9-10> MnSymbolC9
  <10-12> MnSymbolC10
  <12->   MnSymbolC12}{}
\DeclareMathSymbol{\powerset}{\mathord}{MnSyC}{180}
\let\epsilon=\varepsilon
\let\rho=\varrho
\let\theta=\vartheta
\def\NN{{\mathds N}}
\def\ZZ{{\mathds Z}}
\newcommand{\cC}{\mathcal{C}}
\newcommand{\cF}{\mathcal{F}}
\newcommand{\cX}{\mathcal{X}}
\newcommand{\cY}{\mathcal{Y}}
\newcommand{\Ex}{\mathfrak{E}	(n,s)} 
\newcommand{\ex}{\mathrm{ex}}
\newcommand{\gF}{\mathfrak{F}	(n,s)}
\newcommand{\Nn}{\mathrm{N}}
\newcommand{\al}{\alpha}
\newcommand{\df}[1]{{\emph{\bf #1}}}
\newtheoremstyle{note}  {4pt}  {4pt}  {\sl}  {}  {\bfseries}  {.}  {.5em}          {}
\newtheoremstyle{introthms}  {3pt}  {3pt}  {\itshape}  {}  {\bfseries}  {.}  {.5em}          {\thmnote{#3}}
\newtheoremstyle{remark}  {2pt}  {2pt}  {\rm}  {}  {\bfseries}  {.}  {.3em}          {}
\theoremstyle{plain}
\newtheorem{thm}{Theorem}[section]
\newtheorem{lemma}[thm]{Lemma}
\newtheorem{conj}[thm]{Conjecture}
\newtheorem{cor}[thm]{Corollary}
\newtheorem{fact}[thm]{Fact}
\newtheorem{claim}[thm]{Claim}
\theoremstyle{note}
\newtheorem{dfn}[thm]{Definition}
\theoremstyle{remark}
\newcommand{\re}[1]{\textcolor{red!75!black}{#1}}
\newcommand{\bl}[1]{\textcolor{blue!75!black}{#1}}
\def\Fg{\cF_G}
\let\phi=\varphi
\let\vn=\varnothing
\newcommand*\patchAmsMathEnvironmentForLineno[1]{\expandafter\let\csname old#1\expandafter\endcsname\csname #1\endcsname
\expandafter\let\csname oldend#1\expandafter\endcsname\csname end#1\endcsname
\renewenvironment{#1}{\linenomath\csname old#1\endcsname}{\csname oldend#1\endcsname\endlinenomath}}\newcommand*\patchBothAmsMathEnvironmentsForLineno[1]{\patchAmsMathEnvironmentForLineno{#1}\patchAmsMathEnvironmentForLineno{#1*}}\AtBeginDocument{\patchBothAmsMathEnvironmentsForLineno{equation}\patchBothAmsMathEnvironmentsForLineno{align}\patchBothAmsMathEnvironmentsForLineno{flalign}\patchBothAmsMathEnvironmentsForLineno{alignat}\patchBothAmsMathEnvironmentsForLineno{gather}\patchBothAmsMathEnvironmentsForLineno{multline}}
\newcommand{\expl}[1]\relax
\def\GZS{\mathbf{Sym}}\newcommand{\G}{\Gamma}
\begin{document}

\title[The next case of Andr\'asfai's conjecture]{The next case of 
Andr\'asfai's conjecture}

\author[T.~\L uczak]{Tomasz \L uczak}
\address{Adam Mickiewicz University, Faculty of Mathematics and Computer Science, Pozna\'n, Poland}
\email{tomasz@amu.edu.pl}

\author[J.~Polcyn]{Joanna Polcyn}
\address{Adam Mickiewicz University, Faculty of Mathematics and Computer Science, Pozna\'n, Poland}
\email{joaska@amu.edu.pl}
\thanks{The first author was supported in part by National Science Centre, Poland, grant 2022/47/B/ST1/01517.}

\author[Chr.~Reiher]{Christian Reiher}
\address{Fachbereich Mathematik, Universit\"at Hamburg, Hamburg, Germany}
\email{Christian.Reiher@uni-hamburg.de}

\subjclass[2010]{Primary: 05C35, Secondary: 05C69.}
\keywords{Ramsey-Tur\'an theory, extremal graph theory, triangle-free, independent sets.}

\dedicatory{Dedicated to the memory of Vera T. S\'os}

\begin{abstract}
Let $\ex(n,s)$ denote the maximum number of edges in a triangle-free graph 
on~$n$ vertices which contains no independent sets larger than~$s$.
The behaviour of $\ex(n,s)$ was first studied by Andr\'asfai, 
who conjectured that for $s>n/3$ this function is determined by 
appropriately chosen blow-ups of so called Andr\'asfai graphs. 
Moreover, he proved $\ex(n, s)=n^2-4ns+5s^2$ for $s/n\in [2/5, 1/2]$
and in earlier work we obtained $\ex(n, s)=3n^2-15ns+20s^2$ 
for $s/n\in [3/8, 2/5]$.
Here we make the next step in the quest to settle Andr\'asfai's conjecture 
by proving $\ex(n, s)=6n^2-32ns+44s^2$ for $s/n\in [4/11, 3/8]$. 
\end{abstract}

\maketitle

\section{Introduction}\label{sec:intro}
Ramsey-Tur\'an theory was systematically initiated by Vera~T.~S\'os~\cite{ES69}.
Throughout her long career, she frequently returned to this topic, asked numerous 
intriguing questions, and proved a plethora of deep results
(see e.g.,~\cites{ES82, EHSS, EHSSS93, EHSSS97, MuSo}). 
For a survey on this area we refer to Simonovits and S\'os~\cite{SS}. 

A central function studied in this context, but by no means the most general one, 
is the following: Given integers $\ell\ge 3$ and $n\ge s\ge 0$ 
the {\sl Ramsey-Tur\'an number $\mathrm{ex}_\ell(n,s)$} is the maximum number 
of edges in a $K_\ell$-free graph on $n$ vertices which contains no independent
set consisting of more than~$s$ vertices, i.e.,
\[
	\mathrm{ex}_\ell(n,s)
	=
	\max_{G=(V,E)}\bigl\{|E|\colon K_\ell\nsubseteq G,\;|V|=n, 
	\textrm{ and } \al(G)\le s\bigl\}\,.
\]

In this article we concentrate on triangle-free graphs and, eliminating some 
indices, we study the behaviour of the function $\ex(n,s)=\ex_3(n,s)$.
We also write
\[
	\Ex
	=
	\bigl\{G=(V,E)\colon K_3\nsubseteq G,\;|V|=n, \;\al(G)\le s,
		\textrm{ and } |E|=\ex(n,s)\bigr\}
	\]
	for the corresponding families of extremal graphs. 

Let us start with a few observations. 
Mantel's theorem yields $\ex(n,s)=\lfloor n^2/4\rfloor$ for every $s\ge n/2$.
Since in triangle-free graphs the neighbourhoods of all vertices are 
independent sets, we have the trivial bound
\begin{equation}\label{eq:trivial}
	\ex(n,s)\le sn/2\,,
\end{equation}
which holds with equality if and only if there exists an $s$-regular graph 
on $n$ vertices such that $\al(G)=s$. Brandt~\cite{B10} showed that the set 
\[
	\{s/n\colon \textrm{there is a triangle-free $s$-regular graph $G$ 
		on $n$ vertices with } \alpha(G)=s\}
\]
is dense in the interval $[0,1/3]$; thus, from an asymptotic point of view, 
it suffices to characterise the behaviour of $\ex(n,s)$ for $n/3< s < n/2$. 
The first researcher who studied this problem was Andr\'asfai~\cite{A}; he 
proved that for $2n/5\le s < n/2$ the family $\Ex$ consists of certain 
blow-ups of the pentagon. Here and below by a {\it blow-up} of a graph $G$ 
we mean any graph $\hat G$ obtained from $G$ by replacing each of its 
vertices $v_i$ by a (possibly empty) independent set $V_i$ and joining 
two vertex classes~$V_i$ and~$V_j$ of $\hat G$ by all $|V_i||V_j|$ possible 
edges whenever the pair $v_iv_j$ is an edge of~$G$. Notice that, since we allow some vertex sets in a blow-up to be empty, complete bipartite graphs are 
blow-ups of the pentagon as well.

Andr\'asfai also defined a family of graphs $\{\Gamma_k\}_{k\ge 1}$, which 
are now known as Andr\'asfai graphs. For every $k$, his graph $\Gamma_k$ is the Cayley graph $\bigl(\ZZ/(3k-1)\ZZ, \{k,k+1,\dots, 2k-1\}\bigr)$, which means that $V(\G_k)=\ZZ/(3k-1)\ZZ$ and 
\[
	E(\G_k)
	=
	\bigl\{ij\in V(\G_k)^{(2)}\colon i-j\in \{k, k+1, \dots, 2k-1\}\bigr\}\,.
\]
Clearly, $\Gamma_k$ is a $k$-regular graph on $3k-1$ vertices.
Since $\{k,k+1,\dots, 2k-1\}$ is a sum-free subset of the 
group $\ZZ/(3k-1)\ZZ$, Andr\'asfai's graphs are also triangle-free. 
The first few graphs in the sequence are $\Gamma_1=K_2$, the pentagon $\G_2$, 
and the Wagner graph $\G_3$ (see also Figure~\ref{fig000}).

\begin{figure}[ht]
		\centering
		\begin{multicols}{5}
			\ag{2} \\ \ag{3} \\ \ag{4} \\ \ag{5} \\\ag{6}
		\end{multicols}
		\caption{Andr\'asfai graphs  $\G_2$, $\G_3$,  $\G_4$, $\G_5$, and $\G_6$.}
		\label{fig000}
\end{figure}

It is well-known and not hard to verify that the independence number of $\G_k$ 
is equal to its degree. Balanced blow-ups of $\G_k$ have the same property and 
thus
\[
	\ex\big ((3k-1)m, km\big)=k(3k-1)m^2/2\,.
\]
Consequently, if $s=kn/(3k-1)$ for some $k\ge 1$, then $\ex(n,s)$ is determined by the blow-up of an Andr\'asfai graph. Andr\'asfai conjectured that this holds,
much more generally, whenever $s>n/3$ (we refer to~\cite{LPR} for more 
information on this conjecture).

Whenever $kn/(3k-1)\le  s< (k-1)n/(3k-4)$ we define the ``canonical'' blow-up 
$G(n;k,s)$ of $\Gamma_k$ by replacing the vertices $1$, $k$, $2k$ by sets 
of $(k-1)n-(3k-4)s$ vertices each, and the remaining $3k-4$ vertices 
of $\G_k$ by sets of $3s-n$ vertices each. (The case $k=4$ is drawn in
Figure~\ref{fig001}.)

\begin{figure}[ht]
\begin{tikzpicture}[scale=.9]
\foreach \i in {0,..., 20} \coordinate (x\i) at (122.73-32.73*\i:3);

\foreach \i in {0,...,10}{
	\foreach \j [evaluate=\j as \k using \i+\j] in {4,...,7}{
			\draw[black!20, line width=6] (x\i)--(x\k);}
}		
\foreach \i in {0,...,10}{
	\foreach \j [evaluate=\j as \k using \i+\j] in {4,...,7}{
		\draw[thick] (x\i)--(x\k);}
}		
\foreach \i in {2,3,5,6,7,9,10,11}{
			\draw[blue!75!black, thick, fill=blue!15] (x\i) ellipse (.8cm and 14pt);
			\node at (x\i) {\large\textcolor{blue!75!black}{$V_{\i}$}};
}	
\foreach \i in {1,4,8}{
			\draw[red!75!black, thick, fill=red!20] (x\i) ellipse (.5cm and 9pt);
			\node at (x\i) {\textcolor{red!75!black}{$V_{\i}$}};
}	
		
		\end{tikzpicture}
		\caption{The canonical blow-up $G(n;4,s)$. Each `small' set 
		$V_1$, $V_4$, $V_8$ contains $3n-8s$ vertices, the remaining 
		`large' sets consist of $3s-n$ vertices.}
		\label{fig001}
\end{figure}

Elementary calculations show that $G(n;k,s)$ has $n$ vertices, 
independence number $s$, and 
\begin{equation}\label{eq:1727}
		g_k(n, s)
		=
		\tfrac12 k(k-1)n^2-k(3k-4)ns+\tfrac12(3k-4)(3k-1)s^2
\end{equation}
edges (see~\cite{LPR}*{Fact~1.5}). 
It can be shown (cf.\ \cite{LPR}*{Fact~2.6}) that 
\begin{equation}\label{eq:fact26}
	\ex(n,s) \le g_k(n,s)\quad\textrm{ whenever\ } s\notin \bigl(\tfrac{k}{3k-1}n, \tfrac{k-1}{3k-4}n\bigr)\,.
\end{equation}
Furthermore, if $s\in \bigl(\tfrac{k}{3k-1}n, \tfrac{k-1}{3k-4}n\bigr)$, then 
among all blow-ups of $\G_k$ with~$n$ vertices and independence number~$s$ 
the canonical blow-up~$G(n;k,s)$ has the maximum number of 
edges (cf.\ \cite{Vega}*{Lemma~3.3}). Thus, 
Andr\'asfai's conjecture admits the following formulation.

\begin{conj}\label{c:1}
	For all integers $k$, $s$, and $n$ such that 
	$kn/(3k-1)\le  s\le(k-1)n/(3k-4)$ we have 
		\[
		\ex(n,s)=g_k(n,s)\,.
	\]
	\end{conj}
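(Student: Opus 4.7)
The plan is to proceed by induction on $k$. The base cases $k\le 4$ are already settled by Mantel's theorem ($k=1$), Andr\'asfai's original argument ($k=2$), and the authors' previous and present work ($k=3,4$). For the inductive step, fix $k\ge 5$ and assume the conjecture for all smaller values. Given $s/n\in [k/(3k-1),(k-1)/(3k-4)]$, the lower bound $\ex(n,s)\ge g_k(n,s)$ is witnessed by the canonical blow-up $G(n;k,s)$, so the task reduces to the matching upper bound for every $G\in\Ex$.

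The first step would be to extract structural information from the density assumption. Since $G$ is triangle-free, every neighbourhood is independent, giving $\Delta(G)\le s$; together with $|E(G)|=g_k(n,s)$, which by~\eqref{eq:1727} is of order $\tfrac12(k-1)sn$, this forces most vertices of $G$ to have degree close to the ``balanced'' value $kn/(3k-1)$ realised in~$\Gamma_k$. One would then invoke a homomorphism theorem in the spirit of Andr\'asfai--Erd\H os--S\'os: a triangle-free graph with $\alpha(G)\le s$ and sufficiently many high-degree vertices, for $s/n<(k-1)/(3k-4)$, admits a homomorphism into $\Gamma_k$ after removal of a bounded set of exceptional vertices. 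The parameter choice is forced by the threshold behaviour established in~\eqref{eq:fact26}: outside this interval the bound $g_{k-1}(n,s)$ takes over, so $\Gamma_{k-1}$ would be the target instead.

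Once $G$ is realised (up to a few vertices) as a blow-up of $\Gamma_k$, the optimisation among blow-ups of $\Gamma_k$ with prescribed order $n$ and independence number $s$ identifies $G(n;k,s)$ as the unique maximiser; this is exactly the content of \cite{Vega}*{Lemma~3.3} cited above, and was the endgame of the arguments for $k=3$ in~\cite{LPR} and $k=4$ in the present paper. Reattaching the exceptional vertices would rely on a local exchange argument exploiting the inductive hypothesis at $k-1$ together with the matching at the boundary $s=(k-1)n/(3k-4)$, where $g_k(n,s)$ and $g_{k-1}(n,s)$ agree.

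The principal obstacle is the homomorphism step. As $k$ grows the interval $\bigl(k/(3k-1),(k-1)/(3k-4)\bigr)$ shrinks, while $\Gamma_k$ admits increasingly flexible near-homomorphic deformations, so the delicate case analysis that has sufficed for $k\le 4$ is unlikely to generalise unchanged. I expect a genuinely uniform treatment to require a new stability statement for triangle-free graphs that are nearly homomorphic to $\Gamma_k$, quantifying how closely near-extremal graphs must resemble canonical blow-ups; the absence of such a tool is, to the best of my knowledge, precisely what has kept Andr\'asfai's conjecture open beyond the case treated in this paper.
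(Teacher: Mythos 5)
The statement you were asked to prove is Andr\'asfai's conjecture, which remains open; the paper proves only the case $k=4$ (Theorem~\ref{th:main}), so there is no proof of Conjecture~\ref{c:1} in the paper against which to compare your attempt. To your credit, you recognise this explicitly and do not claim to close the gap. That said, two observations may be useful.

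First, a small arithmetic slip: $g_k(n,s)$ is \emph{not} of order $\tfrac12(k-1)sn$ on the interval in question. The canonical blow-up $G(n;k,s)$ has all but a small set of vertices of degree exactly $s$ (the vertices in the one small class whose $\Gamma_k$-neighbourhood meets two small classes have degree $4n-10s<s$ for $k=4$), so throughout the interval $g_k(n,s)=\tfrac12 sn-\tfrac12(3n-8s)(11s-4n)$ when $k=4$, and in general $g_k(n,s)\le \tfrac12 sn$ with equality exactly at the endpoints. This matters because your proposed route leans on the density gap to force degrees near a ``balanced'' value; the actual gap is a second-order term in $(3n-8s)$ and $(11s-4n)$, not a linear-in-$k$ deficit.

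Second, your plan --- induct on $k$, apply an Andr\'asfai--Erd\H os--S\'os-style homomorphism theorem, then optimise over blow-ups of $\Gamma_k$ --- is not how the known cases $k\le 4$ were actually settled, and the hypothetical homomorphism theorem you invoke is not a known result under an independence-number constraint (the classical homomorphism theorems for triangle-free graphs use minimum-degree thresholds, not $\alpha(G)\le s$). The paper's argument for $k=4$, and that of~\cite{LPR} for $k=3$, is structurally different: it inducts on the parameter $\delta=11s-4n$ (the distance to the boundary $s=4n/11$) rather than on $k$, works entirely inside the ``fortress'' graph whose vertices are maximum independent sets, and uses Zykov-type symmetrisation to upgrade an \emph{imprint} of $\Gamma_{k-1}$ to a \emph{mould}, from which the edge count follows by a direct counting argument (Lemma~\ref{l:00}). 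The hard part is precisely building the imprint, which requires a chain of fortress-extension lemmas (Corollary~\ref{cor:n}, Lemma~\ref{lem:62a}, Facts~\ref{f:edge} and~\ref{f:claw}) that seem to become harder to orchestrate as $k$ grows. So the principal obstacle you identify is real, but even the known cases do not proceed via the homomorphism-theorem template you describe.
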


Andr\'asfai himself~\cite{A} proved this for $k=2$, in~\cite{LPR} we added
the case $k=3$, and in~\cite{Vega} we showed that for every $k$ there 
is a small constant $\gamma_k>0$ such that the conjecture holds whenever 	
$kn/(3k-1)\le  s<(k/(3k-1)+\gamma_k)n$. Here we make a further step towards 
the solution of Andr\'asfai's conjecture by addressing the case $k=4$. 
 
\begin{thm}\label{th:main} 
	If $4n/11\le s\le 3n/8$, then 
	$\ex(n,s)= g_4(n,s)=6n^2-32ns+44s^2$.
\end{thm}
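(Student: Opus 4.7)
The plan is to establish the matching upper and lower bounds separately. The lower bound $\ex(n,s) \ge g_4(n,s)$ is witnessed by the canonical blow-up $G(n;4,s)$ of $\Gamma_4$: it is triangle-free, has $n$ vertices, independence number $s$, and precisely $g_4(n,s) = 6n^2 - 32ns + 44s^2$ edges, as recorded in~\eqref{eq:1727}. The real content of the theorem is therefore the upper bound $\ex(n,s) \le g_4(n,s)$.

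For the upper bound I would argue by contradiction, letting $G$ be a triangle-free graph on $n$ vertices with $\alpha(G)\le s$ and $e(G) > g_4(n,s)$, chosen so that $n$ is minimum. Two boundary inputs come for free: the endpoint $s = 3n/8$ is covered by the $k=3$ result of \cite{LPR} since one checks $g_3(n,3n/8) = g_4(n,3n/8)$, while the lower endpoint $s = 4n/11$ is handled directly by \eqref{eq:fact26}. Combined with the minimality of $n$, these endpoint bounds force $G$ to have a high minimum degree: deleting any vertex of small degree would leave a smaller triangle-free counterexample, contradicting minimality.

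The structural core of the argument is to show that this near-regular graph $G$ embeds as a spanning subgraph of a blow-up of $\Gamma_4$. Since $G(n;4,s)$ already maximises the number of edges among all $n$-vertex blow-ups of $\Gamma_4$ with independence number $s$ by \cite{Vega}*{Lemma~3.3}, this would contradict $e(G) > g_4(n,s)$. To reach this structural conclusion I would follow the ``linking'' and neighbourhood-partition framework developed for the $k=3$ case in \cite{LPR} and extended in \cite{Vega}: partition $V(G)$ into eleven independent classes indexed by $\ZZ/11\ZZ$ and prove that every edge joins classes whose indices differ by an element of $\{4,5,6,7\}$. The essential ingredients are triangle-freeness (each neighbourhood is an independent set of size at most $s$), the minimum-degree lower bound, and the identity $|N(u)|+|N(v)|\le n$ valid for every edge $uv$.

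The main obstacle I anticipate is this last structural step. Moving from $\Gamma_3$ (eight vertices, two ``small'' classes in the canonical blow-up) to $\Gamma_4$ (eleven vertices, three ``small'' classes) substantially enlarges the zoo of local configurations that must be excluded, and the asymmetry between small and large vertex classes becomes more pronounced. Handling the regime close to the lower endpoint $s = 4n/11$, where blow-ups of $\Gamma_5$ begin to compete as alternative near-extremisers, appears particularly delicate and likely demands new quantitative lemmas tailored to the interval $[4/11,\, 3/8]$; this is presumably where the bulk of the technical work of the paper resides.
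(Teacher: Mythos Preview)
Your proposal is a plan rather than a proof: you correctly flag that its decisive step---embedding a minimal counterexample as a spanning subgraph of a blow-up of $\G_4$---is left open, and you guess that the paper carries out this eleven-class classification. It does not. The paper bypasses that structural analysis entirely, and the route you sketch (extending the linking framework of~\cite{LPR} from eight classes to eleven) is not what is done here.

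Three differences are essential. First, the induction parameter is $\delta(n,s)=11s-4n$, not $n$; the key extension principle for fortresses (Corollary~\ref{cor:n}) is proved by passing from $(n,s)$ to $(n+3,s+1)$, which decreases $\delta$ by one but increases $n$, so induction on $n$ cannot access it. Second, the argument lives in the \emph{fortress} $\Fg$, the graph whose vertices are the independent $s$-sets of $G$, adjacent when disjoint. The target is not a $\G_4$ blow-up structure on $V(G)$ but merely a copy of $\G_3$ inside some fortress: Lemma~\ref{l:00} shows by a short edge count that a $\G_3$-mould already forces $e(G)\le g_4(n,s)$, and Lemma~\ref{lem:43} upgrades any $\G_3$-imprint to a $\G_3$-mould in some $G\in\Ex$ via Zykov symmetrisation. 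So eight disjointness-patterned independent $s$-sets suffice; the eleven vertex classes you aim for are never constructed. Third, the contradiction is obtained by proving that for a minimal counterexample every fortress is bipartite: pentagons in $\Fg$ are excluded by extending a $\G_2$-mould, through two symmetrisations, to a $\G_3$-imprint (Lemma~\ref{lem:61}), and then all odd cycles follow. A bipartite fortress is then shown (Lemma~\ref{lem:56}) to contain an edge $A_1A_2$ such that no $Z\in V(\cF)$ satisfies $|A_1\cap Z|,|A_2\cap Z|\ge 3s-n$ simultaneously, and two further symmetrisations on this edge produce the final contradiction.
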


We would like to point out that, in general, $G(n;k,s)$ is not the only 
graph in the family~$\Ex$. For instance, in Figure~\ref{fig001} one can 
move some vertices between the two sets~$V_4$ and~$V_5$. Provided that 
the cardinalities of the resulting sets~$V'_4$ and~$V'_5$ remain in the 
interval $[3n-8s, 3s-n]$ the graph keeps having the independence number $s$ 
and the number of edges remains constant as well. For more information 
on the families $\Ex$ we refer to~\cite{Vega}.

We use standard graph theoretical notation. 
For a graph $G$ we denote by~$V(G)$ and~$E(G)$ its vertex and edge set,
respectively, and we write $\nu(G)=|V(G)|$, $e(G)=|E(G)|$.
 If $A, B\subseteq V(G)$ are disjoint,
we write $e_G(A, B)$ for the number of edges in $G$ from $A$ to~$B$ 
and~$e_G(A)$ refers to the number of edges induced by $A$.  
By~$\Nn_G(B)$ we mean the neighbourhood of a set $B\subseteq  V(G)$.
For every $v\in V(G)$ we abbreviate $\Nn_G(\{v\})$ to $\Nn_G(v)$ 
and we put $\deg_G(v)=|\Nn_G(v)|$.
The size of a largest independent set in $G$ is denoted by~$\alpha(G)$. 
Moreover, we often omit subscripts unless they are necessary to avoid confusion. 

If $A$ and $B$ are two disjoint sets, then $K(A, B)$ denotes the edge set 
of the complete bipartite graph with vertex partition $A\dcup B$, i.e., 
\[
	K(A, B) = \bigl\{ab\colon a\in A \text{ and } b\in B\bigr\}\,.
\]
Given a set $X$ we write $\powerset(X)$ for its power set and~$X^{(2)}$
for the collection of all two-element subsets of~$X$. 

\section{Moulds}

In this section we introduce the rough idea behind the proof of Theorem~\ref{th:main}. 
Let us start with a few definitions. First, for any integers $n\ge s\ge 0$ 
it will be convenient to write 
\[
	\gF=\bigl\{G=(V,E)\colon K_3\nsubseteq G,\;|V|=n, 
		\textrm{ and }\al(G)\le s\bigr\}
\]
for the class of triangle-free graphs on $n$ vertices whose independence number is 
at most~$s$. The extremal graphs $\Ex$ form a subclass of $\gF$, and we shall illustrate some of our concepts by looking at the canonical blow-up $G(n; 4, s)$, 
which belongs to $\gF$ as well.  

\begin{dfn}	The \df{fortress} $\Fg$ of a graph $G\in \gF$ is the graph with 
		\[
		V(\Fg) 
		= 
		\{X\subseteq V(G)\colon |X|=s \tand X \textrm{ is independent}\}
	\]
		and 
		\[
	E(\Fg) = \{XY\in V(\Fg)^{(2)}\colon X\cap Y=\vn\}\,.
	\]
		If $G$ is clear from the context, we will often write $\cF$ instead 
	of $\Fg$.		
\end{dfn}

For instance, if $s/n\in (4/11, 3/8)$, then the fortress of the canonical graph
$G(n;4,s)$ (see Figure~\ref{fig001}) has $10$ vertices, 
namely the neighbourhoods of the vertex classes $V_i$ with $i\ne 8$. 
More precisely, the neighbourhoods of the large vertex classes form 
a copy of $\Gamma_3$ in $\cF$ and $\Nn(V_1)$, $\Nn(V_4)$ are
twins of $\Nn(V_{11})$, $\Nn(V_5)$, respectively. The fortresses of other 
extremal graphs obtained by moving some vertices from $V_5$ to $V_4$
have only nine vertices (because~$\Nn(V_1)$ disappears), but these fortresses
still contain copies of $\Gamma_3$.  

\begin{dfn}\label{df:prehypo}
	Let $G\in\gF$ and $H$ be two graphs. An \df{imprint} of $H$ in $G$ is 
	an embedding of $H$ into $\Fg$, i.e., an injective map  
	\[
	\varphi \colon V(H) \longrightarrow V(\Fg)
	\]	
	such that 
	\begin{enumerate}[label=\textit{(M\arabic*)}]
		\item \label{it:h2} $\forall x,y \in V(H) \,\, [xy\in E(H) \,\,\,
		\Longleftrightarrow \,\,\, \varphi(x)\varphi(y)\in E(\Fg)].$ 
	\end{enumerate}	
\end{dfn}

The canonical blow-ups $G(n;4,s)$ have much more structure than just an imprint 
of $\G_3$ --- in addition to that, the eight large vertex classes form a blow-up
of $\G_3$. This interplay of an imprint with a blow-up is captured by the notion 
of a mould. 
  
\begin{dfn}\label{df:hypos}
	Given two graphs $G\in\gF$ and $H$ an \df{$H$-mould} for $G$ is
   a pair $(\varphi,\psi)$ of maps from $V(H)$ to $\powerset(V(G))$ 
   such that $\varphi$ is an imprint and for all $x\in V(H)$,
	\begin{enumerate}[label=\textit{(M\arabic*)}]\setcounter{enumi}{1}
		\item\label{it:h3} $\psi(x)$ is an independent set of size $3s-n$;
		\item\label{it:h4} $K(\varphi(x), \psi(x))\subseteq E(G)$.
	\end{enumerate}
	If some such $H$-mould exists for $G$, we say that $G$ contains an $H$-mould. 
\end{dfn}

When discussing moulds, we always assume tacitly that $s>n/3$, which 
causes the sets~$\psi(x)$ to be nonempty. 
Here are some immediate consequences of the definition of a mould. 

\begin{fact}\label{f:24}
	If $(\varphi,\psi)$ is an $H$-mould for some $G\in \gF$ and $x\in V(H)$,  
	then every $z\in \psi(x)$ satisfies $\Nn_G(z)=\varphi(x)$.
\end{fact}

\begin{proof}
	Due to~\ref{it:h4} we have $\Nn_G(z)\supseteq\varphi(x)$ and because of 
	$|\Nn_G(z)|\le \alpha(G)\le |\varphi(z)|$ this needs to hold with equality.
\end{proof}

\begin{fact}\label{f:00}
	Let $(\varphi,\psi)$ be an $H$-mould for some $G\in \gF$. 
	If $x,y\in V(H)$ are distinct, then $\psi(x)\cap \psi(y)=\vn$. 
\end{fact}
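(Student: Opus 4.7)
The plan is to argue by contradiction, unfolding Definition~\ref{df:hypos}. I would suppose toward a contradiction that some vertex $z$ belongs to both $\psi(x)$ and $\psi(y)$ for distinct $x,y\in V(H)$. Applying~\ref{it:h4} first to $x$ and then to $y$, one obtains
\[
	\Nn_G(z)=\varphi(x) \qand \Nn_G(z)=\varphi(y)\,,
\]
and, in particular, $\varphi(x)=\varphi(y)$ as subsets of $V(G)$.

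The next step is to invoke Definition~\ref{df:prehypo}: the map $\varphi$ is by hypothesis an embedding of $H$ into $\Fg$, hence in particular injective. Thus the equality $\varphi(x)=\varphi(y)$ with $x\ne y$ is the desired contradiction, and we conclude $\psi(x)\cap\psi(y)=\vn$.

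I do not expect any real obstacle here; this is essentially a consistency check. The content of the statement lies less in the proof than in the emphasis it places on the ``only if'' direction of~\ref{it:h4}: every vertex of $\psi(x)$ has neighbourhood \emph{exactly} $\varphi(x)$, not merely containing it. (Implicitly this uses that $\varphi(x)$ is already of maximum independent size $s=\alpha(G)$ and $\Nn_G(z)$ is independent as $G$ is triangle-free, so no vertex can be added to it.) It is precisely this feature of moulds that forces the family $\{\psi(x)\}_{x\in V(H)}$ to be pairwise disjoint, so that its union gives a set of $|V(H)|(3s-n)$ vertices inside $V(G)$, matching the canonical blow-up picture of Figure~\ref{fig001}.
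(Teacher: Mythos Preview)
Your proof is correct and essentially identical to the paper's: both assume a common vertex in $\psi(x)\cap\psi(y)$, use~\ref{it:h4} to deduce $\varphi(x)=\Nn_G(z)=\varphi(y)$, and derive a contradiction from the injectivity of~$\varphi$. Your additional commentary on why~\ref{it:h4} pins down the neighbourhood exactly is accurate but not needed for the argument itself.
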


\begin{proof}
	Let us suppose that $v\in \psi(x)\cap \psi(y)$. Now Fact~\ref{f:24} tells us
	$\varphi(x)=\Nn(v)=\varphi(y)$, which contradicts $x\neq y$ (recall 
	that $\phi$ is required to be injective). 
\end{proof}

\begin{fact}\label{f:45}
	Let $(\varphi,\psi)$ be an $H$-mould for some $G\in \gF$. If $X\in V(\Fg)$ 
	denotes an independent set of size $s$ and $y\in V(H)$, then the following 
	statements are equivalent.
	\begin{enumerate}[label=\rmlabel]
	\item \label{it:45i}$\psi(y)\subseteq X$,
	\item\label{it:45ii}$\psi(y)\cap X \neq\vn$,
	\item\label{it:45iii} $\varphi(y)\cap X =\vn$.
		\end{enumerate}
\end{fact}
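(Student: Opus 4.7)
The plan is to prove the equivalence cyclically: \ref{it:45i}$\Rightarrow$\ref{it:45ii}$\Rightarrow$\ref{it:45iii}$\Rightarrow$\ref{it:45i}. The first two implications are essentially definitional, while the third is the one that uses the maximality of independent sets in $G$.

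For \ref{it:45i}$\Rightarrow$\ref{it:45ii}, I only need to know that $\psi(y)$ is nonempty. This is immediate from~\ref{it:h3}, since $|\psi(y)|=3s-n>0$ under the standing assumption $s>n/3$ that the text records right after the definition of a mould.

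For \ref{it:45ii}$\Rightarrow$\ref{it:45iii}, I would pick any $v\in\psi(y)\cap X$ and invoke~\ref{it:h4}, which tells me that $\Nn_G(v)=\varphi(y)$. Since $X$ is independent and contains $v$, no neighbour of $v$ lies in $X$, hence $\varphi(y)\cap X=\vn$.

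The main step is \ref{it:45iii}$\Rightarrow$\ref{it:45i}, and here the key idea is that $X$ is not just any independent set but has the maximum possible size $s=\alpha(G)$. Assuming $\varphi(y)\cap X=\vn$, I would pick an arbitrary $v\in\psi(y)$; by~\ref{it:h4} its neighbourhood is $\varphi(y)$, which is disjoint from~$X$, so $X\cup\{v\}$ is still independent. If $v$ were not already in $X$, this would give an independent set of size $s+1$, contradicting $\alpha(G)=s$. Hence $v\in X$, and since $v\in\psi(y)$ was arbitrary, $\psi(y)\subseteq X$. No step poses a real obstacle, since the heart of the argument is just the observation that $\psi(y)$-vertices have exactly $\varphi(y)$ as their common neighbourhood, so they behave like "twins" of the vertices already forced to lie in every maximum independent set avoiding~$\varphi(y)$.
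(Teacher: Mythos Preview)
Your proof is correct and follows essentially the same cyclic route as the paper's own argument. The only cosmetic difference is that in \ref{it:45iii}$\Rightarrow$\ref{it:45i} the paper observes directly that $\psi(y)\cup X$ is independent (using that $\Nn(\psi(y))=\varphi(y)$), whereas you phrase this vertex by vertex; the content is identical.
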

\begin{proof} 
	Clearly, \ref{it:45i} implies \ref{it:45ii}. 
	Next, if $v\in \psi(y)\cap X$, then $\Nn(v)=\phi(y)$ needs to be 
	disjoint to the independent set $X$ and so~\ref{it:45ii} 
	implies~\ref{it:45iii}. 

	Finally, assume that $\varphi(y)\cap X=\vn$. 
	Since $\Nn(\psi(y))=\varphi(y)$, the set $\psi(y)\cup X$ is independent. 
	In view of $|X|=s=\alpha(G)$ this shows $\psi(y)\subseteq X$. 
\end{proof}

\begin{fact}\label{f:use}
	If $(\varphi,\psi)$ is an $H$-mould for some $G\in \gF$, then the 
	subgraph of $G$ induced by $\bigcup_{v\in V(H)}\psi(v)$ is the blow-up
	of $H$ obtained by replacing every vertex $v\in V(H)$ by the vertex 
	class $\psi(v)$ of size $3s-n$.  
\end{fact}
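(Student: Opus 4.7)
The plan is to derive Fact~\ref{f:use} by a direct, book-keeping argument that combines the preceding two facts with the defining properties \ref{it:h3} and~\ref{it:h4} of a mould. What needs to be verified is that (a)~the sets $\psi(v)$ are pairwise disjoint, (b)~each $\psi(v)$ is an independent set, and (c)~for distinct $x,y\in V(H)$ the bipartite graph induced between $\psi(x)$ and $\psi(y)$ is either complete or empty, in accordance with whether $xy\in E(H)$ or not.

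For (a) I would simply invoke Fact~\ref{f:00}; for (b) I would quote~\ref{it:h3}, which even specifies the cardinality $3s-n$. The substantive (but short) step is (c). Pick distinct vertices $x,y\in V(H)$ and an arbitrary $u\in \psi(x)$. By~\ref{it:h4} the neighbourhood of $u$ in $G$ is exactly $\varphi(x)$, so whether $u$ is joined to a given vertex of $\psi(y)$ is controlled by whether that vertex lies in $\varphi(x)$. Hence the key reduction is: either $\psi(y)\subseteq \varphi(x)$, or $\psi(y)\cap \varphi(x)=\vn$.

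To execute this reduction, I would apply Fact~\ref{f:45} taking the independent set of size $s$ to be $X=\varphi(x)$ (which is a vertex of $\Fg$ since $\varphi$ is an imprint) and the label to be $y$. The equivalence of~\ref{it:45i}--\ref{it:45iii} furnishes exactly the dichotomy needed, with $\psi(y)\subseteq \varphi(x)$ iff $\varphi(y)\cap \varphi(x)=\vn$, i.e.\ iff $\varphi(x)\varphi(y)\in E(\Fg)$. By property~\ref{it:h2} of an imprint, this last condition is equivalent to $xy\in E(H)$.

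Putting the pieces together, if $xy\in E(H)$ then $\psi(y)\subseteq \varphi(x)=\Nn_G(u)$ for every $u\in \psi(x)$, so all of $K(\psi(x),\psi(y))$ lies in $E(G)$; and if $xy\notin E(H)$ then $\psi(y)\cap \varphi(x)=\vn$, so no edge of $G$ joins $\psi(x)$ to $\psi(y)$. Combined with~(a) and~(b), this is exactly the statement that the induced subgraph on $\bigcup_{v\in V(H)}\psi(v)$ is the blow-up of $H$ described in the fact. There is no genuine obstacle here; the only point that requires care is spotting that one should apply Fact~\ref{f:45} to the independent set $X=\varphi(x)$ rather than to some set more directly built from $\psi$.
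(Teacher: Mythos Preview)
Your argument is correct and follows essentially the same route as the paper: disjointness via Fact~\ref{f:00}, and the edge/non-edge dichotomy via Fact~\ref{f:45} applied with $X=\varphi(x)$. The only cosmetic difference is that the paper handles the converse direction (an edge between $\psi(i)$ and $\psi(j)$ forces $ij\in E(H)$) by a direct appeal to triangle-freeness of $G$, whereas you extract both directions uniformly from the equivalence in Fact~\ref{f:45} together with~\ref{it:h4}.
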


\begin{proof}
	Simplifying the notation we suppose $V(H)=\{1,\dots, \ell\}$ 
	for some natural number~$\ell$ and we set $A_i=\varphi(i)$, $B_i=\psi(i)$ 
	for every $i\in [\ell]$. 
	By Fact~\ref{f:00} the sets $B_1, \dots, B_\ell$ are mutually disjoint.  
	Thus it is enough to show the following statements for every 
	pair~$ij\in [\ell]^{(2)}$. 
		\begin{enumerate}[label=\alabel]
		\item\label{it:26a} If $ij \in E(H)$, then $K(B_i, B_j)\subseteq E(G)$.
		\item\label{it:26b} If there exists an edge $b_ib_j\in E(G)$ 
			with $b_i\in B_i$ and $b_j\in B_j$, then $ij\in E(H)$.
	\end{enumerate}
		Suppose first that $ij\in E(H)$, which means $A_i\cap A_j=\vn$.
 	By Fact~\ref{f:45} applied to $B_i, A_i, A_j$ here in place 
	of $\psi(y), \varphi(y), X$ there this yields $B_i\subseteq A_j$,
	whence 
		\[
		K(B_i,B_j)\subseteq K(A_j, B_j)\subseteq E(G)\,.
	\]
		
	Having thus proved~\ref{it:26a} we proceed with~\ref{it:26b}.
	Since $G$ is triangle-free, 
	$b_ib_j \in E(G)$ entails $\Nn(b_i)\cap \Nn(b_j)=\vn$. 
	So $A_i$ and $A_j$ are disjoint and, therefore, $ij\in E(H)$.
\end{proof}

Recall that the canonical blow-ups $G(n;4,s)$ contain $\G_3$-moulds. 
The starting point of our proof of Theorem~\ref{th:main} is the realisation 
that, in fact, such a mould is all one needs for counting edges.
  
\begin{lemma}\label{l:00}
	If $4n/11< s<3n/8$ and some graph $G\in \Ex$ contains a $\G_3$-mould, 
	then 	$\ex(n,s)\le g_4(n,s)$, i.e., the pair $(n, s)$ satisfies 
	Theorem~\ref{th:main}. 
\end{lemma}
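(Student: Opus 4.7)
The plan is to split the edges of $G$ via the $\Gamma_3$-mould and reduce the lemma to an edge bound inside the complement~$W$. Write $A_i = \varphi(i)$, $B_i = \psi(i)$, $m' = 3s-n$, $m = 3n - 8s$, $W = V(G) \sm \bigcup_i B_i$, and $C_i = A_i \cap W$. Fact~\ref{f:45} applied with $X = A_i$ gives $A_i = C_i \dcup \bigdcup_{j \in \Nn_{\Gamma_3}(i)} B_j$, so $|C_i| = s - 3m' = m$ and $|W| = 3m$. By Fact~\ref{f:use}, the set $\bigcup_i B_i$ induces a blow-up of $\Gamma_3$ contributing $e(\Gamma_3)(m')^2 = 12(m')^2$ edges, and each $v \in B_i$ has $\Nn(v) = A_i$ and therefore sends $|C_i| = m$ edges into~$W$, for a total of $8mm'$ cross edges. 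Thus $e(G) = 12(m')^2 + 8mm' + e(W)$, and a direct expansion identifies $12(m')^2 + 8mm' + 2m^2 = g_4(n,s)$. It therefore suffices to show that $e(W) \le 2m^2$.

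For $w \in W$ put $I(w) = \{i \in V(\Gamma_3) : w \in C_i\}$. Two distinct $i,j \in I(w)$ give $w \in A_i \cap A_j \ne \vn$, so the imprint property forces $ij \notin E(\Gamma_3)$; hence $I(w)$ is independent in $\Gamma_3$ and $|I(w)| \le \alpha(\Gamma_3) = 3$. Letting $n_j = |\{w \in W : |I(w)| = j\}|$, double-counting $W$ and $\bigcup_i C_i$ yields $\sum_j n_j = 3m$ and $\sum_j j\, n_j = 8m$, and subtracting thrice the first from the second gives the linear identity
\[
    3n_0 + 2n_1 + n_2 = m\,.
\]

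I use three degree bounds in $G[W]$: (a) $\deg_W(w) \le 2m$ whenever $|I(w)| \ge 1$; (b) $\deg_W(w) \le m$ whenever $|I(w)| = 3$; (c) the trivial $\deg_W(w) \le 3m$ when $|I(w)| = 0$. For (a), choose any $i \in I(w)$: since $A_i$ is independent and contains $w$, we have $\Nn_W(w) \subseteq W \sm C_i$, which has size $2m$. For (b), every $v \in B_i$ with $i \in I(w)$ satisfies $\Nn(v) = A_i \ni w$, so $\bigcup_{i \in I(w)} B_i \subseteq \Nn(w)$; combined with $\deg(w) \le \alpha(G) \le s$ this yields $\deg_W(w) \le s - 3m' = m$. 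Summing and using $3n_0 + 2n_1 + n_2 = m$,
\[
    2e(W) \le 3m\,n_0 + 2m(n_1+n_2) + m\,n_3 = m\bigl((3n_0+2n_1+n_2)+(n_2+n_3)\bigr) = m(4m - n_0 - n_1) \le 4m^2,
\]
so $e(W) \le 2m^2$, as required.

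The main obstacle lies in choosing the right degree bound for $|I(w)| \in \{1,2\}$: the ``global'' bound $\deg_W(w) \le s - |I(w)|m'$ is too weak in our range and yields only $e(W) \le m(8n-21s)/2 > 2m^2$, whereas the ``local'' bound $\deg_W(w) \le 2m$ --- which exploits nothing more than $C_i$ being an independent set of size $m$ avoiding $\Nn_W(w)$ --- is exactly what is needed. The two bounds coincide at $s = 4n/11$, which is why the argument closes off precisely at $4m^2$ throughout the interval $(4n/11, 3n/8)$.
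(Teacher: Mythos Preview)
Your proof is correct and follows the same overall strategy as the paper: both set up the identical decomposition $e(G)=12(m')^2+8mm'+e(W)$ (with your $W$ being the paper's $\bar W$ and your $m$ its $t$) and reduce everything to showing $e(W)\le 2m^2$.

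The difference lies only in how that last inequality is established. The paper splits into two cases according to the maximum degree $\Delta$ of $G[W]$: if $\Delta\ge 2m$ it invokes the Mantel-type bound $e(W)\le\Delta(3m-\Delta)$, and otherwise it bounds non-heavy vertices by $\Delta\le 2m$ and heavy ones by $m$, using $a\ge 2m$ heavy vertices. You avoid this case split entirely by observing directly that any $w$ with $I(w)\ne\varnothing$ lies in some independent $m$-set $C_i\subseteq W$, forcing $\deg_W(w)\le |W\setminus C_i|=2m$; your linear identity $3n_0+2n_1+n_2=m$ then absorbs the potentially large-degree $n_0$ vertices automatically. This is a modest but genuine streamlining: it removes the appeal to the Mantel bound and the $\Delta$-dichotomy at the cost of tracking the four counts $n_0,\dots,n_3$ rather than just the number of heavy vertices. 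Your closing remark about why the ``local'' bound $2m$ beats the ``global'' bound $s-|I(w)|m'$ is a nice diagnostic that the paper does not make explicit.
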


\begin{proof}
	Let $(\varphi,\psi)$ be a $\G_3$-mould for $G\in \Ex$, 
	and set $B_i=\psi(i)$ for $i=1,2,\dots,8$. 
	Our aim is to show that $G$ has at most as many edges as the canonical 
	blow-up $H=G(n;4,s)$. The calculations given below become almost obvious 
	if one looks at Figure~\ref{fig001}. 
	
	Note that the eight sets $B_i$ are pairwise disjoint, consist of $3s-n$ 
	vertices each, and by Fact~\ref{f:use} their union 
	$W=\bigdcup_{i\in [8]} B_i$
	induces a balanced blow-up of $\G_3$ in~$G$. We shall compare~$W$ with 
	the union $L$ of the eight large vertex classes of $H$ and its complement 
	$\bar W=V(G)\setminus W$ with the union of the three small 
	sets, i.e., with $\bar L=V(H)\sm L$. Due to $|W|=|L|=8(3s-n)$ 
	we have $|\bar W|=|\bar L|=3t$, where $t=3n-8s$ denotes the common size 
	of the small sets of $H$.
	
	It is plain that $e_G(W)=12(3s-n)^2=e_H(L)$.
	Next, every vertex $w\in W$ has degree $s$ and $3(3s-n)$ 
	neighbours of $w$ are themselves in $W$. 
	Thus $w$ sends $s-3(3s-n)=t$
	edges to $\bar W$, which shows that the number $e_G(W,\bar W)$ of edges 
	in $E(G)$ joining $W$ to $\bar W$ is 
		\begin{equation}\label{eq:1952}
		e_G(W,\bar W)=8(3s-n)t=e_H(L,\bar L)\,.
	\end{equation}
		Consequently, it remains to prove 
		\begin{equation}\label{eq:1955}
		e_G(\bar W)\le 2t^2=e_H(\bar L)\,.
	\end{equation}
		
	To this end we consider the maximum degree $\Delta$ of $G[\bar W]$.
	Since $G$ is triangle-free, a well-known strengthening of Mantel's 
	theorem yields $e_G(\bar W)\le \Delta(3t-\Delta)=2t^2-(\Delta-2t)(\Delta-t)$.
	So if $\Delta\ge 2t$ we are done immediately and henceforth we can suppose 
		\begin{equation}\label{eq:1957}
		\Delta\le 2t\,.
	\end{equation}
			
	Now we revisit~\eqref{eq:1952} from the perspective of $\bar W$. 
	For every vertex $\bar w\in\bar W$ the set $\Nn(\bar w)\cap W$ is a 
	(possibly empty) union of several of the sets $B_i$. In fact, it can 
	be the union of at most $\alpha(\G_3)=3$ such sets. Let us call $\bar w$ 
	\df{heavy} if $\Nn(\bar w)\cap W$ consists of exactly three sets $B_i$, 
	so that $|\Nn(\bar w)\cap W|=3(3s-n)$. Clearly, if $\bar w\in\bar W$
	fails to be heavy, then $|\Nn(\bar w)\cap W|\le 2(3s-n)$. Concerning the 
	number $a$ of heavy vertices we thus obtain
		\[
		e_G(W, \bar W)
		=
		\sum_{\bar w\in \bar W}|\Nn(\bar w)\cap W|
		\le 
		(3s-n)[3a+2(3t-a)]
		=
		(3s-n)(6t+a)\,,
	\]
		which together with~\eqref{eq:1952} shows $8t\le 6t+a$, i.e., $a\ge 2t$.
	
	As each heavy vertex has at most $s-3(3s-n)=t$ neighbours in $\bar W$,
	this yields  
		\[
		2e_G(\bar W)
		=
		\sum_{\bar w\in \bar W}|\Nn(\bar w)\cap \bar W|
		\overset{\eqref{eq:1957}}{\le} 
		at+(3t-a)\cdot 2t
		=
		6t^2-at\le 4t^2\,,
	\]
		which proves~\eqref{eq:1955}.
\end{proof}

Observe that not all graphs $G\in \Ex$ have $\G_3$ as their moulds. 
Indeed, if starting from $G(n;4,s)$ we move some vertices between $V_5$ 
and $V_4$ such that both new sets have fewer than $3s-n$ vertices, 
then the resulting graph only contains an imprint of $\G_3$, but not 
a $\G_3$-mould. 

This state of affairs motivates us to establish that if there is an imprint  
of $\G_k$ in some graph belonging to $\Ex$, then there also exists a graph 
in $\Ex$ containing a $\G_k$-mould (see Lemma~\ref{lem:43} below). 
The proof hinges on a symmetrisation procedure the basic idea of which
can be traced back to the work of Zykov~\cite{Zy} on Tur\'an's theorem. 
Here we follow the notation introduced in~\cite{LPR}. 
 
Given a graph~$G$ and two disjoint sets $A,B\subseteq V(G)$, we say that 
a graph~$G'$ on the same vertex set as~$G$ arises from~$G$ by the 
\df{generalised Zykov symmetrisation $\GZS(A, B)$} if it is obtained by 
deleting all edges incident with $B$ and afterwards adding all edges 
from~$A$ to~$B$. Explicitly, this means
\[
	V(G')=V(G) 
	\quad \text{ and } \quad 
	E(G') 
	= 
	\bigl(E(G)\setminus \{e\in E(G)\colon e\cap B\neq\vn\}\bigr)\cup K(A,B)\,.
\]
We shall write $G'=\GZS(G \,|\, A, B)$ in this situation.
Let us now state a straightforward consequence of \cite{LPR}*{Fact 2.4}
and \cite{LPR}*{Lemma 2.5}. 

\begin{lemma}\label{lem:B''}
	Given two integers $n\ge 0$ and $s\in [n/3, n/2]$, let $G\in\Ex$, and 
	let $A_1,A_2\subseteq V(G)$ be any two disjoint independent sets of size $s$.
	If $M$ is a matching from $V(G)\setminus (A_1\cup A_2)$ 
	to $A_2$ whose size is as large as possible 
	and $B'\subseteq A_2\setminus V(M)$, 
	then $G'=\GZS(G\,|\, A_1, B')$ is again in $\Ex$. \qed
\end{lemma}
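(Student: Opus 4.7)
The plan is to verify each property defining $\Ex$ for $G'$. Since $V(G')=V(G)$, the vertex count is immediate, and once we have triangle-freeness together with $\alpha(G')\le s$, the equality $e(G')=\ex(n,s)$ will follow from $e(G')\ge e(G)$.

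For triangle-freeness, any triangle in $G'$ disjoint from $B'$ already lies in~$G$, contradicting $G\in\Ex$; a triangle meeting $B'$ at some vertex~$b$ must be contained in $\{b\}\cup\Nn_{G'}(b)=\{b\}\cup A_1$, so its remaining two vertices lie in~$A_1$, which is independent in~$G$ and (since $A_1\cap B'=\vn$) in~$G'$ as well. For $e(G')\ge e(G)$, every $b\in B'$ has an independent $G$-neighbourhood (as $G$ is triangle-free), hence $\deg_G(b)\le\alpha(G)=s$. The only edges altered by $\GZS$ are those incident to~$B'$, so
\[
	e(G')-e(G)=|A_1|\cdot|B'|-\sum_{b\in B'}\deg_G(b)\ge 0.
\]

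The bulk of the work is to show $\alpha(G')\le s$. Assume for contradiction that $I$ is $G'$-independent with $|I|\ge s+1$. If $I\cap B'=\vn$, then no edge inside $I$ is affected by $\GZS$, so $I$ is also $G$-independent, contradicting $\alpha(G)=s$. Otherwise, any $b\in I\cap B'$ has $\Nn_{G'}(b)=A_1$, forcing $I\cap A_1=\vn$. Writing $X=I\cap B'$, $Y=I\cap(V(G)\setminus(A_1\cup A_2))$ and $Z=I\cap(A_2\setminus B')$, the only $G$-edges inside~$I$ absent from $G'$ are those between~$X$ and~$Y$: every other intra-$I$ pair either lies inside the independent set~$A_2$ or avoids~$B'$ entirely. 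Since $X\subseteq A_2\setminus V(M)$, the maximality of~$M$ forces each such edge $xy$ (with $x\in X$, $y\in Y$) to have $y\in V(M)$ and partner $m(y)\in A_2\setminus B'$, since otherwise $M\cup\{xy\}$ would be a larger matching. I would then use an alternating-path analysis in the bipartite graph on $V(G)\setminus(A_1\cup A_2)$ and~$A_2$ (with $G$-edges) to swap each such~$y$ for~$m(y)$, proceeding along the alternating tree rooted at~$X$, and finally discard~$X$ itself. Because $M$ is maximum, no augmenting path leaves this tree, and one verifies that the resulting set $I^{\circ}$ is $G$-independent of cardinality at least $|I|\ge s+1$, contradicting $\alpha(G)=s$.

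The main obstacle is this last step: carefully specifying which vertices to swap along the alternating tree and checking that no new $G$-edges appear inside~$I^{\circ}$. This is precisely where both the maximality of~$M$ and the hypothesis $B'\cap V(M)=\vn$ are used in full; with that verification completed, $G'\in\Ex$ follows.
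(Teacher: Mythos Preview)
The paper does not prove this lemma here; it is imported from~\cite{LPR} (their Fact~2.4 and Lemma~2.5), so your outline is necessarily ``different'' in that you attempt a self-contained argument. The reductions to triangle-freeness and to $e(G')\ge e(G)$ are fine, and your set-up for the independence bound is correct up to the point where you isolate the decomposition $I=X\dcup Y\dcup Z$ and observe that the only $G$-edges inside~$I$ run from~$X$ to~$Y$.

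The gap is in the alternating-tree step, and it is not just incompleteness: the plan as stated is wrong. If you ``swap each such $y$ for $m(y)$'' and then ``discard $X$ itself'', the resulting set can be strictly smaller than~$I$. Already when $X=\{x\}$, $Y=\{y\}$, $Z=\vn$ and $xy\in E(G)$, your $I^\circ$ is $\{m(y)\}$, of size~$1<2=|I|$. The correct construction keeps~$X$: let $R\subseteq A_2$ and $T_W\subseteq V(G)\setminus(A_1\cup A_2)$ be the vertices reachable from~$X$ by $M$-alternating paths; then $N_H(R)=T_W$, the map $m$ gives a bijection $T_W\to R\setminus X$, and one takes $I^\star=(Y\setminus T_W)\cup Z\cup R$. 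Independence of~$I^\star$ in~$G$ follows from $N_H(R)\subseteq T_W$, and the size bound $|I^\star|\ge |I|$ reduces to $|T_W\setminus Y|\ge |Z\cap R|$; this holds because every $z\in Z\cap R$ satisfies $m^{-1}(z)\in T_W\setminus Y$ (otherwise $m^{-1}(z)\,z$ would be a $G'$-edge inside~$I$). So your instinct to use the alternating tree is right, but the set you must form is $(Y\setminus T_W)\cup Z\cup R$, not one with~$X$ removed.
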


In practice there might be several possible choices for the maximum matching $M$
and thus one can try to exercise some control over the location of the set $B'$.
There will be one step in a later argument where we actually need this freedom, 
but in all other cases only the cardinality of $B'$ will matter. Due to 
$|V(G)\setminus (A_1\cup A_2)|=n-2s$ we will automatically 
have $|A_2\setminus V(M)|\ge s-(n-2s)=3s-n$ and thus we can always achieve
$|B'|=3s-n$. It seems worthwhile to state this separately.  

\begin{cor}\label{lem:B'}
	If $s\in [n/3, n/2]$, $G\in\Ex$ and $A_1,A_2\subseteq V(G)$ are two 
	disjoint independent sets of size $s$, then there exists a set 
	$B\subseteq A_2$ of size $|B|=3s-n$ such that $\GZS(G|A_1,B)\in \Ex$. \qed
\end{cor}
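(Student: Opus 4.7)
The plan is to derive this directly from Lemma~\ref{lem:B''} by a matching-counting argument. Fix any maximum matching $M$ from $V(G)\setminus(A_1\cup A_2)$ to $A_2$, so that Lemma~\ref{lem:B''} applies to every subset $B'\subseteq A_2\setminus V(M)$. The only thing to verify is that $A_2\setminus V(M)$ is large enough to host a subset $B$ of the prescribed size $3s-n$.

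Since $A_1$ and $A_2$ are disjoint subsets of $V(G)$ of size $s$ each, we have $|V(G)\setminus(A_1\cup A_2)|=n-2s$. Consequently, the matching $M$ saturates at most $n-2s$ vertices on the $A_2$ side, so
\[
    |A_2\setminus V(M)|\ge s-(n-2s)=3s-n\,.
\]
The hypothesis $s\ge n/3$ ensures that $3s-n\ge 0$, so we may pick an arbitrary subset $B\subseteq A_2\setminus V(M)$ with $|B|=3s-n$. By Lemma~\ref{lem:B''}, the graph $\GZS(G\,|\,A_1,B)$ belongs to $\Ex$, as required.

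There is essentially no obstacle here; the statement is a clean bookkeeping consequence of Lemma~\ref{lem:B''}, the only content being the trivial inequality $|V(G)\setminus(A_1\cup A_2)|\le s-(3s-n)$. The reason it is nonetheless worth stating separately is that in most subsequent applications one only needs to know that \emph{some} symmetrisation with $|B|=3s-n$ lies in $\Ex$, so packaging the matching argument into this corollary avoids repetition later.
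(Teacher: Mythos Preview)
Your proof is correct and follows exactly the argument the paper gives in the paragraph preceding the corollary: choose a maximum matching $M$, note that $|V(G)\setminus(A_1\cup A_2)|=n-2s$ bounds the number of $A_2$-vertices covered by $M$, conclude $|A_2\setminus V(M)|\ge 3s-n$, and take any $B$ of that size. The paper records this reasoning informally and then states the corollary with a \qed, so your write-up is in fact slightly more explicit than the original.
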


To mention just one example of this procedure, consider a graph $G\in \Ex$ 
obtained from $G(n;4,s)\in \Ex$ by moving some vertices from $V_5$ to $V_4$. 
For $A_1= V_9\cup V_{10}\cup V_{11}\cup V_1$ 
and $A_2=V_4\cup V_5\cup V_6\cup V_{7}$ there has to exist a 
set $B\subseteq A_2$ of size $3s-n$ such that $G'=\GZS(G|A_1,B)$ is again an 
extremal graph. Since $\Nn_G(V_9)\cup B$ and $\Nn_G(V_1)\cup B$ are independent 
in~$G'$, and hence of size at most $s$, we need to have $V_5\subseteq B\subseteq V_4\cup V_5$. 
Consequently,~$G'$ 
is isomorphic to the canonical blow-up $G(n;4,s)$. 
Thus we can use symmetrisation in order to `canonise' graphs from $\Ex$.
We apply this technique to show the following result.

\begin{lemma}\label{lem:43}
	Let integers $n\ge 0$, $s\in (n/3, n/2]$, and $k\ge 1$ be given. 
	If some graph in~$\Ex$ contains an imprint of $\G_k$, then $\G_k$ 
	is a mould for some $G\in \Ex$. 
\end{lemma}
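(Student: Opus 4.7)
The plan is to use an extremal argument driven by Corollary~\ref{lem:B'}. For any pair $(G,\varphi)$ with $G\in\Ex$ and $\varphi$ an imprint of $\Gamma_k$ in $G$, write $A_i=\varphi(v_i)$ for $i\in[3k-1]$ and $C_i(G,\varphi)=\{z\in V(G):\Nn_G(z)=A_i\}$ for those vertices that already look like members of a candidate $\psi(v_i)$. Two easy observations show that $(G,\varphi)$ extends to a $\Gamma_k$-mould exactly when $|C_i|\ge 3s-n$ for every $i$: each $C_i$ is automatically an independent set (two of its vertices share the neighbourhood $A_i$ in the triangle-free graph $G$, so cannot be adjacent), and $K(A_i,C_i)\subseteq E(G)$ by construction, so any $(3s-n)$-subset of $C_i$ may serve as $\psi(v_i)$.

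I would therefore pick, among all admissible pairs, one $(G^*,\varphi^*)$ maximising the potential $\Phi(G,\varphi)=\sum_{i=1}^{3k-1}|C_i|$; the maximum exists since $0\le\Phi\le n$. If every $|C_i|\ge 3s-n$ we are done. Otherwise, fix an index $r$ with $|C_r|<3s-n$, pick a neighbour $v_q$ of $v_r$ in $\Gamma_k$, and invoke Corollary~\ref{lem:B'} with $A_1=A_r$ and $A_2=A_q$ to obtain a set $B\subseteq A_q$ of size $3s-n$ with $G':=\GZS(G^*\,|\,A_r,B)\in\Ex$. By construction every $b\in B$ satisfies $\Nn_{G'}(b)=A_r$, so at least $3s-n$ vertices of $G'$ are already assigned to the new $C_r$; to contradict extremality I aim to exhibit an imprint $\varphi'$ of $\Gamma_k$ in $G'$ with $\Phi(G',\varphi')>\Phi(G^*,\varphi^*)$.

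A direct inspection of which edges get added and removed by the symmetrisation shows that $A_\ell$ stays independent in $G'$ whenever $A_r\cap A_\ell=\varnothing$ (automatic for $v_\ell v_r\in E(\Gamma_k)$, since no new edges then land inside $A_\ell$) or $B\cap A_\ell=\varnothing$ (automatic for $v_\ell v_q\in E(\Gamma_k)$, since then $A_\ell\cap A_q=\varnothing$ and $B\subseteq A_q$). Whenever these conditions hold for every $\ell$, the choice $\varphi'=\varphi^*$ works; combining this with the structural inclusion $C_\ell\subseteq\bigcap\{A_{\ell'}:v_{\ell'}v_\ell\in E(\Gamma_k)\}$, which follows from $\alpha(G^*)=s$, yields a short bookkeeping in which every vertex leaving some old $C_\ell$ is reassigned to the new $C_r$, so $\Phi$ strictly increases by at least $3s-n-|C_r(G^*,\varphi^*)|>0$.

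The main obstacle, which appears already for $k\ge 2$ and cannot be avoided for $k=4$, is the presence of ``far'' indices $\ell\ne r,q$ with $v_\ell v_r\notin E(\Gamma_k)$ and $v_\ell v_q\notin E(\Gamma_k)$: for such $\ell$ one must additionally guarantee $B\cap A_\ell=\varnothing$ in order to preserve independence of $A_\ell$ in $G'$. My plan is to exploit the flexibility in the choice of the maximum matching $M$ underlying Lemma~\ref{lem:B''} --- precisely the freedom in positioning $B'$ mentioned in the paragraph preceding Corollary~\ref{lem:B'} --- and to arrange $V(M)\cap A_q\supseteq\bigcup_{\ell\text{ far}}(A_q\cap A_\ell)$, so that $B\subseteq A_q\setminus V(M)$ automatically avoids every problematic $A_\ell$. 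Proving the existence of such a maximum matching is the technical heart of the argument; I expect it to reduce to a Hall-type calculation leveraging $\alpha(G^*)=s$ together with the $\Gamma_k$-intersection pattern of the sets $A_\ell$. Once this is in place, the bookkeeping from the previous paragraph applies and contradicts the extremal choice of $(G^*,\varphi^*)$.
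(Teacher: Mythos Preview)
Your potential-function framework is a reasonable alternative to the paper's approach, but the ``main obstacle'' you flag dissolves without any matching argument, and the Hall-type calculation you are planning is both unnecessary and not obviously feasible. Here is the observation you are missing. After performing $G'=\GZS(G^*\,|\,A_r,B)$, every $b\in B$ has $\Nn_{G'}(b)=A_r$. For each neighbour $j$ of $r$ in $\Gamma_k$ the set $A_j$ is disjoint from $A_r$, hence stays independent in $G'$; since $\Nn_{G'}(B)=A_r$ is disjoint from $A_j$, the set $B\cup A_j$ is independent in $G'$, and $\alpha(G')=s=|A_j|$ forces $B\subseteq A_j$. Thus $B\subseteq\bigcap_{j\in N_{\Gamma_k}(r)}A_j$ \emph{automatically}, regardless of which $B\subseteq A_q$ Corollary~\ref{lem:B'} delivers. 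Now use that $\Gamma_k$ has diameter~$2$: any $\ell\ne r$ with $v_\ell v_r\notin E(\Gamma_k)$ has a common neighbour $j$ with $r$, whence $B\subseteq A_j$ and $A_j\cap A_\ell=\varnothing$ give $B\cap A_\ell=\varnothing$. So every $A_i$ remains independent in $G'$ with no control over the matching $M$ required.

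The paper exploits exactly this observation but organises the proof differently. Instead of a potential, it calls $r\le 3k-1$ \emph{nice} if some $G\in\Ex$ carries an imprint $A_1,\dots,A_{3k-1}$ together with sets $B_1,\dots,B_r$ of size $3s-n$ satisfying $B_\ell\subseteq\bigcap_{j\in N_{\Gamma_k}(\ell)}A_j$ and $K(A_\ell,B_\ell)\subseteq E(G)$; starting from $r=0$ (the hypothesis) it takes the maximal nice $r_\star$, applies Corollary~\ref{lem:B'} once to produce $B_{r_\star+1}$, and checks via the observation above that the imprint and all earlier $B_\ell$ survive in the symmetrised graph. This sidesteps your $\Phi$-bookkeeping entirely. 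If you wish to keep the potential approach, the same observation fills your gap, though you should also revisit the claim that $\Phi$ increases by at least $3s-n-|C_r|$: vertices of $B$ may already sit in some $C_\ell$ with $\ell\ne r$, and tracking those transfers cleanly again uses $B\subseteq\bigcap_{j\in N_{\Gamma_k}(r)}A_j$.
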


\begin{proof}
	Let us recall that $V(\G_k)=\ZZ/ (3k-1)\ZZ$ and 
				\[
			E(\G_k) 
			= 
			\bigl\{ij\in V(\G_k)^{(2)}\colon 
			i-j\in \{k, k+1, \dots, 2k-1\}\bigr\}\,.
		\]
			We call a number $r\le 3k-1$ \df{nice} if there is a graph $G\in \Ex$ 
	containing independent sets $A_1, \dots, A_{3k-1}$ of size $s$ 
	and independent sets $B_1, \dots, B_r$ of size $3s-n$ such that 
	\begin{enumerate}[label=\rmlabel]
		\item\label{it:i} $ij\in E(\G_k) \Longleftrightarrow A_i\cap A_j = \vn$;
		\item\label{it:ii} $B_\ell \subseteq A_{\ell+k}\cap A_{\ell+k+1}\cap \dots\cap A_{\ell+2k-1}$ for all $\ell \in [r]$;
		\item\label{it:iii}  $K(A_\ell, B_\ell)\subseteq E(G)$ for all $\ell \in [r]$.
	\end{enumerate}
	
	For $r=0$ the conditions~\ref{it:ii} and~\ref{it:iii} are void and~\ref{it:i} 
	states that $G$ contains an imprint of $\G_k$. So, by our assumption, 
	$0$ is nice. Let $r_\star$ be the largest nice number. Our goal 
	is to  show that  $r_\star=3k-1$, since then the maps $\varphi(i)= A_i$ 
	and $\psi(i)= B_i$ will certify that $\G_k$ is a mould for $G$, 
	and we are done. 

	Thus, let us assume that $r_\star<3k-1$. As the sets $A_{r_\star+1}$ 
	and $A_{r_\star+k+1}$ are disjoint, Corollary~\ref{lem:B'} delivers a set 
	$B_{r_\star+1} \subseteq A_{r_\star+k+1}$ of size $3s-n$ 
	such that $\GZS(A_{r_\star+1}, B_{r_\star+1})$ produces a 
	graph $G^\circ$ in $\Ex$. Since $A_{r_\star+1}$ is disjoint 
	to $A_{r_\star+k+1},\dots, A_{r_\star+2k}$, these sets stay independent in $G^\circ$
	and Fact~\ref{f:45} applied to $G^\circ$ yields
		\begin{enumerate}[label=\alabel]
		\item\label{it:a} $B_{r_\star+1}\subseteq A_{r_\star+k+1}\cap \dots\cap A_{r_\star+2k}$. 
	\end{enumerate}
		Each of the vertices $r_\star+ 2k+1, \dots, r_\star+k$ of $\G_k$ is adjacent 
	to one of $r_\star+k+1, \dots, r_\star+2k$ and thus~\ref{it:i} 
	and~\ref{it:a} yield
		\begin{enumerate}[label=\alabel, resume]
		\item\label{it:b} $B_{r_\star+1}$ is disjoint 
			to $A_{r_\star+2k+1}, \dots, A_{r_\star+k}$.
	\end{enumerate}
		Altogether, every $A_i$ is disjoint to 
	either $A_{r_\star+1}$ or $B_{r_\star+1}$, whence
		\begin{enumerate}[label=\alabel, resume]
		\item\label{it:c} $A_1, \dots, A_{3k-1}$ are independent in $G^\circ$.
	\end{enumerate}
		
	Next, for every $\ell\in [r_\star]$ there is some 
	$j\in \Nn_{\G_k}(\ell)\sm \Nn_{\G_k}(r_\star+1)$.
	Since~\ref{it:ii} and~\ref{it:b} yield $B_\ell\subseteq A_j$ as well 
	as $A_j\cap B_{r_\star+1}=\vn$, this proves that 
		\begin{enumerate}[label=\alabel, resume]
		\item\label{it:d} $B_{r_\star+1}$ is disjoint to $B_1, \dots, B_{r_\star}$.
	\end{enumerate}
		
	We contend that
	\begin{enumerate}[label=\alabel, resume]
		\item\label{it:e} $K(A_\ell, B_\ell)\subseteq E(G^\circ)$ for 
			every $\ell\in [r_\star]$.
	\end{enumerate}
		If $\{\ell, r_\star+1\}\not\in E(\G_k)$ this is because~\ref{it:b} 
	and~\ref{it:d} entail $(A_\ell\cup B_\ell)\cap B_{r_\star+1}=\vn$, 
	which means that the edges provided by~\ref{it:iii} are not affected 
	by the symmetrisation at all.
	On the other hand, if $\{\ell, r_\star+1\}\in E(\G_k)$, then~\ref{it:ii} 
	and~\ref{it:a} imply $B_\ell\subseteq A_{r_\star+1}$ and 
	$B_{r_\star+1}\subseteq A_\ell$. So the symmetrisation first removes the 
	edges in $K(B_\ell, B_{r_\star+1})$ and then they are immediately put back, 
	so that altogether~\ref{it:iii} remains valid. Thereby~\ref{it:e} is proved.

	Clearly $K(A_{r_\star+1}, B_{r_\star+1})\subseteq E(G^\circ)$ and 
	together with~\ref{it:a},~\ref{it:c},~\ref{it:e} this shows 
	that~$G^\circ$ exemplifies that~$r_\star+1$ is nice, 
	contrary to the maximality of~$r_\star$.
\end{proof}

So the hypothesis of Lemma~\ref{l:00} can be weakened considerably. 

\begin{cor}\label{cor:210}
	If $4n/11< s<3n/8$ and some graph $G\in \Ex$ contains a $\G_3$-imprint, 
	then 	$\ex(n,s)\le g_4(n,s)$. \qed
\end{cor}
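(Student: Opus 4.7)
The plan is to chain together the two preceding lemmas: Lemma~\ref{lem:43} upgrades a $\G_3$-imprint to a $\G_3$-mould (possibly in a different extremal graph), and Lemma~\ref{l:00} then converts such a mould into the desired edge bound.

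First I would verify that the hypotheses of Lemma~\ref{lem:43} are in force. Since $4n/11 < s$ and $4/11 > 1/3$, we have $s > n/3$, while $s < 3n/8 < n/2$ gives the upper bound; thus $s \in (n/3, n/2]$. Applying Lemma~\ref{lem:43} with $k=3$ to the graph $G$ from the hypothesis yields a graph $G^\star \in \Ex$ for which $\G_3$ is a mould. The graph $G^\star$ need not coincide with $G$, but since both lie in $\Ex$ they share the same edge count $\ex(n,s)$.

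Next, I would feed $G^\star$ into Lemma~\ref{l:00}. Its hypotheses---namely $4n/11 < s < 3n/8$ and the existence of some $H \in \Ex$ containing a $\G_3$-mould---are now both satisfied (take $H = G^\star$), and the conclusion is exactly $\ex(n,s) \le g_4(n,s)$.

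Presented in this way, the corollary is essentially a one-line composition of two already established results, so there is no genuine obstacle to overcome at this step. The conceptual heavy lifting has been carried out earlier: the Zykov-type symmetrisation that promotes an imprint to a mould lives inside Lemma~\ref{lem:43}, while the edge-counting argument exploiting the maximum-degree bound on $G[\bar W]$ (via the Mantel-type inequality $e_G(\bar W)\le \Delta(3t-\Delta)$ together with the ``heavy vertex'' count $a \ge 2t$) lives inside Lemma~\ref{l:00}.
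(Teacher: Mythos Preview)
Your proposal is correct and matches the paper's intended argument: the corollary is stated with a \qed immediately after the line ``So the hypothesis of Lemma~\ref{l:00} can be weakened considerably,'' indicating precisely the composition of Lemma~\ref{lem:43} (to pass from a $\G_3$-imprint to a $\G_3$-mould in some possibly different extremal graph) with Lemma~\ref{l:00}. Your verification that $s\in(n/3,n/2]$ so that Lemma~\ref{lem:43} applies is the only detail worth recording, and you have handled it correctly.
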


\section{Basic fortress tricks}

In this section we take a closer look at the fortresses of extremal graphs. 
The observation that a triangle in a fortress is tantamount to three mutually 
disjoint independent sets of size $s$ has the following consequence. 

\begin{fact}\label{fact:c3}
	Whenever $s>n/3$ and $G\in\Ex$ the fortress of $G$ is triangle-free. \qed
\end{fact}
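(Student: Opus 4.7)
The plan is to argue by contradiction via a direct cardinality count. Suppose the fortress $\cF_G$ contains a triangle, say on vertices $X_1, X_2, X_3 \in V(\cF_G)$. By the definition of $V(\cF_G)$, each $X_i$ is an independent set in $G$ of size exactly $s$, and by the definition of $E(\cF_G)$, the three edges $X_1X_2$, $X_2X_3$, $X_1X_3$ force the three sets to be pairwise disjoint.

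Now I would simply add sizes: since $X_1, X_2, X_3$ are pairwise disjoint subsets of $V(G)$, we get
\[
	n = |V(G)| \ge |X_1 \cup X_2 \cup X_3| = |X_1| + |X_2| + |X_3| = 3s\,,
\]
contradicting the hypothesis $s>n/3$. Hence no such triangle can exist, and $\cF_G$ is triangle-free.

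There is no real obstacle here; the only subtlety worth emphasising is that a triangle in the fortress is by definition a collection of three \emph{pairwise} disjoint maximum independent sets, and it is the pairwise disjointness (not merely overall-intersection-emptiness) together with the assumption $s>n/3$ that forces the contradiction.
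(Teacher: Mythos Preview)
Your proof is correct and is exactly the argument the paper intends: the sentence preceding the fact already states that a triangle in $\cF_G$ amounts to three mutually disjoint independent sets of size $s$, and the \qed indicates that the contradiction $n\ge 3s$ with $s>n/3$ is immediate. There is nothing to add.
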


For Corollary~\ref{cor:210} to be useful we need to argue that, at least 
under some inductive assumptions on $n$ and $s$, one can build a $\G_3$-imprint 
in some graph $G\in\Ex$. The very first step in this direction was undertaken 
in~\cite{LPR}*{Lemma~2.8}, where we proved such a statement 
for $\Gamma_1=K_2$ rather than $\G_3$. 

\begin{lemma}\label{lem:twosets}
	If $n$ and $s$ are two integers with $n\ge 2s \ge 0$, then every 
	graph $G\in \Ex$ contains two disjoint independent sets of size $s$. \qed 
\end{lemma}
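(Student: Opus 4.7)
The plan divides into two parts: first proving $\alpha(G) = s$ for every $G \in \Ex$ with $n \geq 2s$, and then showing that some independent set of size~$s$ is disjoint from a prescribed one.

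For the first part, I would use that in any triangle-free graph the neighbourhood $\Nn_G(v)$ of a vertex is independent, so $\deg_G(v) \leq \alpha(G)$ for every $v$. Consequently, if $\alpha(G) \leq s-1$, the trivial bound~\eqref{eq:trivial} becomes $e(G) \leq (s-1)n/2$. To reach a contradiction, I would exhibit an explicit lower bound $\ex(n,s) > (s-1)n/2$ valid whenever $n \geq 2s$: when $n = 2s$ this is witnessed by $K_{s,s}$ with $s^2$ edges, and otherwise by a balanced blow-up of the Andr\'asfai graph $\G_k$ for the value of $k$ matching the range of $s/n$, whose edge count from~\eqref{eq:1727} exceeds $(s-1)n/2$ by a linear term. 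This forces $\alpha(G) = s$.

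For the second part, fix a maximum independent set $A$ of size~$s$ and let $W = V(G) \setminus A$, so that $|W| = n - s \geq s$. The task reduces to showing $\alpha(G[W]) \geq s$; suppose for contradiction $\alpha(G[W]) \leq s-1$. Since $A$ is maximum, each $w \in W$ has at least one neighbour in~$A$. I would then apply a Zykov-style symmetrisation in the spirit of the operation $\GZS$ discussed later in the paper: choose a vertex of~$A$ whose neighbourhood we modify, replacing it with the neighbourhood of a high-degree vertex of~$W$, and iterate. The assumption $\alpha(G[W]) \leq s-1$ is designed to ensure that the symmetrisation keeps the graph triangle-free, does not decrease the number of edges, and does not raise $\alpha$ above~$s$. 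The resulting graph either exhibits a second independent set of size~$s$ disjoint from the image of~$A$, directly contradicting the starting assumption, or collapses into a structure whose edge count falls strictly below $\ex(n,s)$, contradicting the extremality of~$G$.

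The main obstacle is the second part. A naive edge count comparing $e_G(W) \leq (s-1)(n-s)/2$ with $e_G(A,W) \leq s^2$ is not sharp enough to yield a contradiction in the boundary case $n = 2s$, so the symmetrisation argument is essential. The delicate point is to choose which vertex of~$A$ to rewire so that the constraint $\alpha \leq s$ survives: the hypothesis $\alpha(G[W]) \leq s-1$ is used precisely to rule out the twin neighbourhoods created by symmetrisation from combining with independent sets in~$W$ to produce an independent set of size larger than~$s$.
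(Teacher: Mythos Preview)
The paper does not prove this lemma here; it is imported from~\cite{LPR}*{Lemma~2.8}, as the terminal \qed\ signals. So there is no in-paper argument to compare against, but your plan contains a concrete error that would not be repairable along the lines you indicate.

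Your Part~1 rests on the inequality $\ex(n,s) > (s-1)n/2$, which you claim is witnessed by Andr\'asfai blow-ups via~\eqref{eq:1727}. This is false already in the range where $\ex$ is known exactly. Take $n=60$, $s=27$: then $s/n = 9/20 \in [2/5, 1/2]$, so by Andr\'asfai's own theorem $\ex(60,27) = g_2(60,27) = 3600 - 6480 + 3645 = 765$, whereas $(s-1)n/2 = 26 \cdot 30 = 780$. More generally, on each interval $s/n \in \bigl[k/(3k-1),\,(k-1)/(3k-4)\bigr]$ the function $s\mapsto g_k(n,s) - sn/2$ is a nonpositive quadratic vanishing at both endpoints, with minimum value $-n^2/\bigl(8(3k-4)(3k-1)\bigr)$; hence $g_k(n,s) > (s-1)n/2$ fails whenever $n \ge 4(3k-4)(3k-1)$ and~$s$ is near the middle of the interval. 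Since $\ex = g_k$ is \emph{proved} for $k=2,3$ (and is the content of the present paper for $k=4$), no alternative construction can rescue the inequality there. What Part~1 actually needs is the strict monotonicity $\ex(n,s) > \ex(n,s-1)$, which is genuinely weaker---in the example above $\ex(60,26)=g_2(60,26)=740<765$---but cannot be obtained by comparing with the trivial bound~\eqref{eq:trivial}.

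Your Part~2 is a plan rather than a proof. You correctly diagnose that crude edge-counting is insufficient, but the symmetrisation you sketch---rewiring a vertex $a\in A$ to copy the neighbourhood of a high-degree vertex $w\in W$---is not shown to preserve $\alpha\le s$. Cloning $w$ in this way can raise $\alpha$ by one precisely when $w$ lies in some independent set of size~$s$ in $G$, and the hypothesis $\alpha(G[W])\le s-1$ does not exclude this (such a set may meet $A$). The sentence ``the assumption $\alpha(G[W]) \leq s-1$ is designed to ensure that the symmetrisation keeps \dots'' is asserting exactly the point that needs proof.
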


If, starting from an edge, one attempts to obtain larger subgraphs
of a fortress $\cF$, it would be helpful to know some construction principles
telling us that certain configurations in $\cF$ are extendible in a 
somewhat controlled manner. One of the most powerful of these principles 
we are aware of (see Corollary~\ref{cor:n} below) can be shown to apply 
to a given pair $(n, s)$ if Theorem~\ref{th:main} itself is assumed to be valid 
for $(n+3, s+1)$. This rules out a straightforward induction on $n$ or $s$, 
but we can still argue by induction on $11s-4n$, which is what we shall 
actually do.

\begin{dfn}
	Set $\delta(n, s)=11s-4n$ for all nonnegative integers $n$ and $s$. 
	The pair $(n, s)$ is said to be a \df{minimal counterexample}
	if $\ex(n, s)>g_4(n, s)$ and 
		\[
		\ex(n', s')\le g_4(n', s')
		\quad 
		\text{ for all $(n', s')$ with } \delta(n', s')<\delta(n, s)\,.
	\]
	\end{dfn}

Let us recall at this juncture that if the estimate $\ex(n, s)\le g_4(n, s)$
fails for any pair of integers $n\ge s\ge 0$, then~\eqref{eq:fact26} yields
\begin{equation}\label{eq:4711}
	\frac{4n}{11}<s<\frac{3n}8\,,
\end{equation}
whence $\delta(n, s)>0$. This means that if minimal counterexamples do not 
exist, then $\ex(n, s)\le g_4(n, s)$ holds unconditionally and together with 
the existence of the canonical blow-ups $G(n; 4, s)$ mentioned in the 
introduction this completes the proof of Theorem~\ref{th:main}.

Thus it suffices to show in the remainder of this article that there are 
no minimal counterexamples. When studying such a minimal counterexample $(n, s)$
we will write $\delta$ instead of $\delta(n, s)$ and the 
bounds~\eqref{eq:4711} will often be used without further referencing.  

\begin{lemma}\label{cl:1}
	Let $(n, s)$ be a minimal counterexample, $G\in \Ex$, 
	and $XY\in E(\cF)$. If $C$ denotes a further independent set in $G$ 
	with 	
		\[
			|C| > 4n - 10s\,,
	\]
		then there exists some $Z\in  V(\cF)$ such that $C\cap Z = \vn$ 
	and either $XZ\in E(\cF)$ or $YZ\in E(\cF)$.
\end{lemma}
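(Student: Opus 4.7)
I would argue by contradiction, exploiting the minimality of $(n,s)$ by constructing an auxiliary graph on $n+3$ vertices that contradicts the inductive hypothesis at $(n+3,s+1)$.

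Suppose no such $Z$ exists. Then every independent set of size $s$ in $G$ that is disjoint from~$C$ meets both $X$ and~$Y$; equivalently,
\[
	\alpha(G-X-C)\le s-1 \qand \alpha(G-Y-C)\le s-1\,,
\]
since an independent set of size $s$ in $V(G)\setminus(X\cup C)$ (resp.\ $V(G)\setminus(Y\cup C)$) would directly provide the required $Z$. Next I would enlarge~$G$ by three new vertices $c_1, c_2, c_3$ with $\Nn(c_1)=X$, $\Nn(c_2)=Y$, $\Nn(c_3)=C$, and introduce a single new edge $c_1c_2$, obtaining a graph $G^*$ on $n+3$ vertices. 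Because $X$, $Y$, $C$ are independent in $G$, no triangle of $G^*$ contains just one of the~$c_i$, and a triangle $c_1vc_2$ would require $v\in X\cap Y=\varnothing$; hence $G^*$ is triangle-free.

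To verify $\alpha(G^*)\le s+1$, I would enumerate the independent subsets $S\subseteq\{c_1,c_2,c_3\}$; the triple is excluded by the edge $c_1c_2$, so the only non-trivial cases are $S=\{c_1,c_3\}$ and $S=\{c_2,c_3\}$, contributing at most $2+\alpha(G-X-C)\le s+1$ and $2+\alpha(G-Y-C)\le s+1$ respectively. For the edge count, a direct calculation from~\eqref{eq:1727} gives $g_4(n+3,s+1)=g_4(n,s)+4n-8s+2$, while $|C|\ge 4n-10s+1$ yields
\[
	e(G^*)=e(G)+|X|+|Y|+|C|+1\ge e(G)+4n-8s+2>g_4(n,s)+(4n-8s+2)=g_4(n+3,s+1)\,.
\]
Since $\delta(n+3,s+1)=\delta(n,s)-1<\delta(n,s)$, the minimality of $(n,s)$ gives $\ex(n+3,s+1)\le g_4(n+3,s+1)$, contradicting the lower bound on~$e(G^*)$.

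The delicate ingredient is the choice of exactly one new-new edge. Adding a second new-new edge would force $X\cap C=\varnothing$ or $Y\cap C=\varnothing$ for triangle-freeness, conditions the hypothesis does not supply, while leaving $\{c_1,c_2,c_3\}$ fully independent would require the unavailable bound $\alpha(G-X-Y-C)\le s-2$. The single edge $c_1c_2$ threads the needle, using only $X\cap Y=\varnothing$ together with the two hypotheses on~$\alpha$, and the edge-count threshold $4n-8s+2$ matches the hypothesis $|C|>4n-10s$ exactly; this tight matching is where I expect the main technical care to be required.
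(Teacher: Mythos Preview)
Your proof is correct and follows essentially the same approach as the paper: both construct the auxiliary triangle-free graph on $n+3$ vertices by attaching new vertices to $X$, $Y$, $C$ and adding the single edge between the $X$- and $Y$-vertices, and both compare with $g_4(n+3,s+1)$ via $\delta(n+3,s+1)=\delta(n,s)-1$. The only difference is cosmetic: the paper argues directly (showing $e(G^+)>\ex(n+3,s+1)$ forces an independent set of size $s+2$, which must be of the form $Z\cup\{x,c\}$ or $Z\cup\{y,c\}$), whereas you phrase the same thing contrapositively as $\alpha(G^*)\le s+1$ under the negated conclusion.
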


\begin{proof}
	Construct a graph $G^+$ by adding three new vertices $x, y, c$ to $G$ 
	as well as all edges from $x$ to $X$, from $y$ to $Y$, from $c$ to $C$, 
	and finally the edge $xy$ (see Figure~\ref{fig:G+3}). 
	Clearly $K_3\nsubseteq G^+$ and $\nu(G^+) = n+3$.
	
		\begin{figure}[ht]
	\centering
	\begin{tikzpicture}[scale=.8]

			\coordinate (b1) at (-2.2,0);
			\coordinate (b2) at (0,0);
			\coordinate (b3) at (2,0);
			
			\coordinate (a1) at (-2.2,1.3);
			\coordinate (a2) at (0,1.3);
			\coordinate (a3) at (2,1.3);

		\fill [red!15, rotate=0] (a1)--($(b1)+(.97,.16)$) -- ($(b1)-(.97,-.16)$) -- cycle;
		\fill [blue!10, rotate=0 ] (a2)--($(b2)+(.97,.16)$) -- ($(b2)-(.97,-.16)$) -- cycle;
		\fill [green!15] (a3)--($(b3)+(.6,.05)$) -- ($(b3)-(.6,.05)$) -- cycle;

			\fill (a1) circle (2pt);		
			\draw [red!70!black,rotate=0, thick] (b1) ellipse (1cm and .5cm);	
			\fill [red!10, rotate=0] (b1) ellipse (1cm and .5cm);	
		
		\fill (a2) circle (2pt);		
	\draw [blue!70!black,rotate=0, thick] (b2) ellipse (1cm and .5cm);	
	\fill [blue!10, rotate=0] (b2) ellipse (1cm and .5cm);

		\fill (a3) circle (2pt);		
		\draw [green!70!black,thick] (b3) ellipse (.6cm and .3cm);	
		\fill [green!10!white] (b3) ellipse (.6cm and .3cm);

		\draw  (a1)--(a2);

	\node at (b2) {$Y$};
	\node at (b1) {$X$};
	\node at (b3) {$C$};

		\node [right] at (a2) {\tiny $y$};
		\node [right] at (a3) {\tiny $c$};
		\node [left] at (a1) {\tiny $x$};

			\node at (-4,.8) {\large\textcolor{black}{$G^+$}};
		
	\end{tikzpicture}
	\caption{The graph $G^+$. In reality, $C$ can intersect $X$ or $Y$.}
	\label{fig:G+3}
\end{figure}
 		
	Due to $\delta(n+3, s+1)=\delta-1$ the minimality of $(n, s)$ yields 
		\begin{align*}
		\ex(n+3, s+1) &\le g_4(n+3, s+1) = g_4(n,s) + 2s + (4n-10s)+2\\
		&< \ex(n,s) +|X|+|Y|+|C|+1=e(G^+)\,.
	\end{align*}
		
	Hence the graph $G^+$ contains an independent set $Z^+$ of size $s+2$. 
	Clearly $Z^+$ can contain at most $s$ old vertices and, due to the edge $xy$,
	at most two new vertices. 
	So without loss of generality we can assume $Z^+=Z\cup \{x, c\}$
	for some $Z\in V(\cF)$. Now the independence of $Z^+$ in $G^+$ 
	yields $Z\cap X=\vn$ and $Z\cap C = \vn$.
\end{proof}
	
In most applications of this lemma we take $C$ to be an independent set 
of size $s$. We thus arrive at the following structural property of fortresses.
  	 
\begin{cor}\label{cor:n}
	If $(n, s)$ is a minimal counterexample, then the fortress of 
	every extremal graph $G\in \Ex$ has the following property: 
	For all $X, Y,Z\in V(\cF)$ such that $XY\in E(\cF)$ 
		there exists some $T\in V(\cF)$ with $ZT\in E(\cF)$ and 
	either $XT\in E(\cF)$ or $YT\in E(\cF)$ 
	(see Figure \ref{fig:eth3}). \qed
\end{cor}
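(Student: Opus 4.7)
The plan is to invoke Lemma~\ref{cl:1} directly, with the independent set $C$ in that lemma taken to be the given set $Z$. Indeed, the conclusion of the lemma is that there exists some $T\in V(\cF)$ with $C\cap T=\vn$ and either $XT\in E(\cF)$ or $YT\in E(\cF)$; when $C$ is itself an $s$-element independent set belonging to $V(\cF)$, the condition $C\cap T=\vn$ is by definition the same as $CT\in E(\cF)$. So on taking $C=Z$ the conclusion of the lemma matches the conclusion we want for the corollary exactly.

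The only thing that needs to be verified is that the hypothesis $|C|>4n-10s$ of Lemma~\ref{cl:1} holds when $C=Z$. Since $Z\in V(\cF)$, we have $|Z|=s$. The bound $s>4n/11$ from~\eqref{eq:4711}, which was noted to hold automatically whenever a minimal counterexample exists, is equivalent to $11s>4n$, and this in turn rearranges to $s>4n-10s$. Thus $|Z|=s>4n-10s$ and the hypothesis of Lemma~\ref{cl:1} is satisfied, so the lemma applies and produces the required~$T$.

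No step here is a real obstacle; the corollary is essentially a repackaging of the lemma. The only conceptual point worth flagging is that the numerical threshold $4n-10s$ appearing in Lemma~\ref{cl:1} is exactly calibrated so that any $s$-set clears it in the range~\eqref{eq:4711} relevant to minimal counterexamples, and it is this coincidence that turns the lemma into the clean ``one-step extension'' property stated in the corollary.
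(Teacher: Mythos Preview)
Your proposal is correct and matches the paper's approach exactly: the paper also obtains Corollary~\ref{cor:n} simply by applying Lemma~\ref{cl:1} with $C=Z$, noting that $|Z|=s>4n-10s$ follows from~\eqref{eq:4711}. The paper states this in one sentence and marks the corollary with a \qed, so your write-up is just a slightly more detailed version of the same argument.
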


	\begin{figure}[ht]
	\centering
	\begin{tikzpicture}[scale=1]
	
	\coordinate (a1) at (-3,.5);
	\coordinate (b1) at (-3,-.5);
	\coordinate (c1) at (-2, 0);
	
	\coordinate (a2) at (4,.5);
	\coordinate (b2) at (4,-.5);
	\coordinate (c2) at (5, 0);
	\coordinate (d2) at (4.7, .7);
	
	\coordinate (a3) at (8,.5);
	\coordinate (b3) at (8,-.5);
	\coordinate (c3) at (9, 0);
	\coordinate (d3) at (8.7, .7);
	
	\foreach \i in {1,2,3} \draw [thick] (a\i)--(b\i);
	
	\draw [red, thick] (a2)--(d2)--(c2);
	\draw [red, thick] (b3)--(d3)--(c3);
	
	\foreach \i in {a1,a2,a3,b1,b2,b3,c1,c2,c3,d2,d3}
	\fill (\i) circle (2pt);
	
	\node [left] at (a1) {$X$};
	\node [left] at (b1) {$Y$};
	\node [right] at (c1) {$Z$};
	\node [left] at (a2) {$X$};
	\node [left] at (b2) {$Y$};
	\node [right] at (c2) {$Z$};
	\node [right] at (d2) {$T$};
	\node [left] at (a3) {$X$};
	\node [left] at (b3) {$Y$};
	\node [right] at (c3) {$Z$};
	\node [right] at (d3) {$T$};
	
	\node at (-5,0) {For all};
	\node at (1,0) {there exists either};
	\node at (6.5, 0) {or};
	
	\end{tikzpicture}
	\caption{Vertices $X$, $Y$, $Z$, and $T$ in the fortress $\cF$ of~$G$.}
	\label{fig:eth3}
\end{figure}
 
We shall also encounter an application of Lemma~\ref{cl:1} later 
where $C$ is simply the neighbourhood of a vertex whose degree is 
sufficiently large. At that occasion, it will be useful to know that
only a small number of vertices is not suitable for this purpose. 
  
\begin{lemma}\label{lem:q}
	If $4n/11<s<3n/8$ and $G\in\Ex$, then all but less than $3s-n$ vertices 
	$x\in V(G)$ satisfy $\deg_G(x)> 4n-10s$.
\end{lemma}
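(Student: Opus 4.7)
The plan is to argue by contradiction, assuming that the set $L=\{x\in V(G):\deg_G(x)\le 4n-10s\}$ has size at least $3s-n$, and then to exhibit an impossibly tight upper bound on $e(G)$.

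First I would recall the two ingredients that drive the estimate. Since $G$ is triangle-free, every neighbourhood is independent, so $\deg_G(x)\le\alpha(G)\le s$ for every $x\in V(G)$; this is the ceiling for vertices outside~$L$. On the other hand, since the canonical blow-up $G(n;4,s)$ belongs to $\Ex$ (we are in the range $s/n\in(4/11,3/8)$ where it is defined with independence number exactly~$s$), we have the lower bound $e(G)=\ex(n,s)\ge g_4(n,s)=6n^2-32ns+44s^2$.

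Next I would bound $e(G)$ from above by double counting degrees. Writing $\ell=|L|$,
\[
	2e(G)=\sum_{x\in L}\deg_G(x)+\sum_{x\notin L}\deg_G(x)\le \ell(4n-10s)+(n-\ell)s.
\]
The coefficient of $\ell$ is $(4n-10s)-s=4n-11s<0$ thanks to $s>4n/11$, so this bound is maximised when $\ell$ is as small as possible, namely $\ell=3s-n$. Substituting gives
\[
	2e(G)\le (3s-n)(4n-10s)+(2n-3s)s=-4n^2+24ns-33s^2.
\]

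Finally I would combine the two bounds. The inequality $g_4(n,s)\le \tfrac12(-4n^2+24ns-33s^2)$ simplifies, after multiplying by~$2$ and collecting terms, to
\[
	16n^2-88ns+121s^2\le 0,\qquad\text{i.e.,}\qquad (4n-11s)^2\le 0.
\]
But $s>4n/11$ forces $11s-4n>0$, a contradiction. This step is entirely mechanical; the only ``trick'' is recognising that the extremal comparison collapses to a perfect square in~$11s-4n$, which is essentially forced by the endpoint $s=4n/11$ of the range under consideration. No obstacle beyond this routine calculation is expected, since both inputs (the triangle-free degree bound and the blow-up lower bound) are already available without appealing to the minimality of~$(n,s)$ or to any imprint-of-$\G_3$ machinery.
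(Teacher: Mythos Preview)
Your argument is correct and follows the same degree-sum idea as the paper: bound $2e(G)$ above by $(n-q)s+q(4n-10s)=ns-q(11s-4n)$ and below by $2g_4(n,s)$. The paper presents this directly rather than by contradiction, rewriting $2g_4(n,s)=ns-(3n-8s)(11s-4n)$ to read off the slightly sharper bound $q\le 3n-8s<3s-n$, but the content is identical.
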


\begin{proof}
	Let $q$ denote the number of small-degree vertices under consideration. 
	By summing up the vertex degrees we obtain  
		\begin{align*}
		ns - (3n - 8s)(11s-4n)
		&=
		2g_4(n,s)
		\le
		2e(G) \\
		&\le (n-q)s + q(4n-10s) \\
		&= ns -q(11s - 4n)\,,
	\end{align*}
		whence 
		\[
		q\le 3n-8s<(3n-8s)+(11s-4n)=3s-n\,. \qedhere
	\]
\end{proof} 

Utilising Lemma~\ref{lem:twosets} and Corollary~\ref{cor:n} it is not hard 
to see that for minimal counterexamples $(n, s)$ the fortresses of extremal
graphs have no isolated vertices. A somewhat different argument leads to a
marginally stronger result, the following.  
 
\begin{lemma}\label{cl:62}
	If $(n, s)$ is a minimal counterexample, $G\in \Ex$, and $Q\subseteq V(G)$ 
	is an independent set of size at least $s-1$, then there exists an 
	independent set $X$ of size $s$ with $Q\cap X = \vn$.
\end{lemma}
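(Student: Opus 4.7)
The plan is to proceed by case analysis on $|Q|\in\{s-1,s\}$. In both cases, since $s<3n/8$ forces $n>2s$, Lemma~\ref{lem:twosets} supplies two disjoint independent sets $A_1,A_2$ of size $s$, giving an edge $A_1A_2\in E(\cF)$.

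If $|Q|=s$, then $Q\in V(\cF)$, and Corollary~\ref{cor:n} applied with $X:=A_1$, $Y:=A_2$, and $Z:=Q$ produces some $T\in V(\cF)$ with $QT\in E(\cF)$. By the definition of~$\cF$, this $T$ is an independent $s$-set with $T\cap Q=\vn$, which is the desired set.

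If $|Q|=s-1$, I would first attempt to extend~$Q$: if there is some $v\in V(G)\setminus Q$ with no neighbour in~$Q$, then $Q':=Q\cup\{v\}\in V(\cF)$ and the previous paragraph applied to $Q'$ yields an independent $s$-set $X$ with $X\cap Q\subseteq X\cap Q'=\vn$. Otherwise~$Q$ is a maximal independent set of~$G$, and I would invoke Lemma~\ref{cl:1} with $X:=A_1$, $Y:=A_2$, and $C:=Q$. The hypothesis $|C|>4n-10s$ rewrites as $s-1>4n-10s$, i.e.\ $\delta\ge 2$, and in this range Lemma~\ref{cl:1} supplies a $Z\in V(\cF)$ with $Q\cap Z=\vn$; take this $Z$ as the required set.

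The sharpest point is the remaining subcase $|Q|=s-1$, $Q$ maximal, and $\delta=1$, in which $|Q|=4n-10s$ exactly and the strict inequality needed by Lemma~\ref{cl:1} degenerates. To resolve this I would re-inspect the auxiliary-graph argument behind Lemma~\ref{cl:1}: equality there forces $e(G)=g_4(n,s)+1$ and the graph $G^+$ to be edge-extremal for $(n+3,s+1)$, a very rigid configuration. Combining this saturation with the maximality of~$Q$ (every vertex outside~$Q$ has a neighbour in~$Q$) should either produce the desired~$X$ directly---for example by replacing the attachment of~$c$ in the auxiliary construction with a vertex adjacent to $Q\cup\{w\}$ for a carefully chosen $w\in A_1\cup A_2$ non-adjacent to~$Q$, so as to restore the strict inequality---or contradict the assumption that $(n,s)$ is a counterexample. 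Handling this boundary $\delta=1$ case is where I expect the genuine work to lie.
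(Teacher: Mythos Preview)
Your argument is complete and correct for $|Q|=s$, and for $|Q|=s-1$ whenever either $Q$ is not maximal or $\delta\ge 2$. But the residual case $|Q|=s-1$, $Q$ maximal, $\delta=1$ is a genuine gap, and your proposed patch does not work. If $Q$ is a \emph{maximal} independent set, then every vertex outside $Q$ has a neighbour in $Q$; there is no $w\in A_1\cup A_2$ ``non-adjacent to $Q$'' unless $w\in Q$ already, so you cannot enlarge the attachment of $c$ to $Q\cup\{w\}$ without creating a triangle. Re-inspecting the inequality in Lemma~\ref{cl:1} also does not close the gap: with $|C|=s-1=4n-10s$ the computation gives $e(G^+)\le \ex(n+3,s+1)+\bigl(\ex(n,s)-g_4(n,s)-1\bigr)$, so you would need $\ex(n,s)\ge g_4(n,s)+2$, which is not guaranteed by the minimal-counterexample hypothesis.

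The paper avoids this difficulty by a different and more direct construction: add a \emph{single} new vertex $q$ adjacent to all of $Q$, obtaining a triangle-free graph $G^+$ on $n+1$ vertices with $e(G^+)\ge \ex(n,s)+(s-1)$. One then proves the inequality $\ex(n+1,s)<\ex(n,s)+(s-1)$ directly---using the trivial bound~\eqref{eq:trivial} for $\delta\le 2$ (so in particular for your problematic $\delta=1$), and the inductive hypothesis via $\delta(n+1,s)=\delta-4$ for $\delta\ge 3$. This forces $\alpha(G^+)\ge s+1$, and any independent $(s+1)$-set must be $X\cup\{q\}$ with $X$ as required. This route handles $|Q|\in\{s-1,s\}$ uniformly and never needs $Q$ to satisfy $|Q|>4n-10s$.
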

	
\begin{proof}
	We start by showing that
		\begin{equation}\label{eq:44}
		\ex(n+1, s) < \ex(n,s) + (s-1)\,.
	\end{equation}
		In the special case $\delta \le 2$ the trivial bound~\eqref{eq:trivial} 
	entails indeed
	\begin{align*}
			2\ex(n+1, s) 
			&\le 
			(n+1)s=2 g_4(n, s)+(11s-4n)(3n-8s) +s\\
			&< 
			2\ex(n,s) +\delta(3n-8s)  + s \le 2\ex(n,s) + 2(3n-8s) +s\\
			&= 
			2\ex(n,s) + (4n-11s) + 2(n-3s) +2s < 2\ex(n,s) +2s-2\,.
	\end{align*}
	Otherwise $\delta \ge 3$ and $\delta(n+1, s)=\delta-4<\delta$ combined 
	with the minimality of $(n, s)$ yields
		\begin{align*}
		\ex(n+1, s) 
		&\le 
		g_4(n+1, s) = g_4(n,s) + 12 n -32s+6\\
		&= 
		g_4(n,s) +s +6 -3\delta < \ex(n,s) + s-1\,.
	\end{align*}
	
	Thereby~\eqref{eq:44} is proved. Now we construct a triangle-free 
	graph $G^+$ by adding to $G$ a new vertex $q$ and all edges from $q$ to $Q$.
	Owing to~\eqref{eq:44} we have $e(G^+) > \ex(n+1, s)$ and, therefore, 
	$G^+$ contains an independent set of size $s+1$. Such a set needs to be of 
	the form $X\cup \{q\}$, where $X$ is as required.
\end{proof}

\section{Imprints of the pentagon}

The main result of this section asserts that for minimal 
counterexamples $(n, s)$ the fortresses of extremal graphs $G\in\Ex$ 
cannot contain pentagons. The first idea one might have towards proving this 
is that such a pentagon could lead, due to some general properties of 
fortresses, to a $\G_3$-imprint, which would contradict Corollary~\ref{cor:210}. 
After a quick look at a picture of $\G_3$ and a pentagon $\Gamma_2$ 
contained in it we notice that one of the three extra vertices of $\Gamma_3$
has precisely one neighbour in $V(\Gamma_2)$. This raises the question whether 
it is always true that given a pentagon in a fortress~$\cF$, there exists 
a vertex of~$\cF$ with exactly one neighbour in the pentagon. 
Here is a positive answer for moulds.  

\begin{lemma}\label{lem:62a}
	If $(n, s)$ is a minimal counterexample, $G\in \Ex$, and $(\phi, \psi)$
	denotes a $\G_2$-mould for $G$, then there is a vertex $X\in V(\cF)$ adjacent 
	to exactly one of the vertices $\phi(v)$ with $v\in V(\G_2)$.
\end{lemma}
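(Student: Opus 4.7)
The plan is to argue by contradiction. Identify $V(\Gamma_2)$ with $\ZZ/5\ZZ$ cyclically and write $A_v = \phi(v)$, $B_v = \psi(v)$. Suppose that no $X \in V(\cF)$ has exactly one neighbour among $A_1, \dots, A_5$. Since $\cF$ is triangle-free (Fact~\ref{fact:c3}), every $X \in V(\cF)$ has exactly $0$ or $2$ pentagon neighbours. By Fact~\ref{f:45}, this is encoded by the set $S(X) = \{v : A_v \cap X = \vn\}$, which must be a pentagon-independent set of size $0$ or $2$; in particular, the size-$2$ case forces $S(X)$ to be a non-adjacent pair in $\Gamma_2$.

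As a first concrete step, I would apply Lemma~\ref{cl:1} with $C = A_1$ (admissible since $s > 4n/11$ implies $|A_1| = s > 4n - 10s$) and the pentagon edge $XY = A_2 A_3$. Triangle-freeness of $\cF$ rules out the case $Z \cap A_2 = \vn$ (otherwise $A_1 A_2 Z$ would be a triangle in $\cF$), so the produced $Z$ satisfies $Z \cap A_3 = \vn$ as well; hence $S(Z) \supseteq \{1,3\}$, and the assumption pins down $S(Z) = \{1,3\}$, so $B_1 \cup B_3 \subseteq Z$. Next I would apply Lemma~\ref{cl:62} with $Q = Z$ to produce $X' \in V(\cF)$ with $X' \cap Z = \vn$. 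Since $B_1 \cup B_3 \subseteq Z$, Fact~\ref{f:45} gives $1, 3 \notin S(X')$. A volume bound excludes $S(X') = \vn$: such an $X'$ would lie inside $U \setminus Y_Z$, where $U = V(G) \setminus \bigcup_v B_v$ and $Y_Z = Z \setminus (B_1 \cup B_3)$ has size $2n - 5s$, yielding $s = |X'| \le (6n - 15s) - (2n - 5s) = 4n - 10s$, contrary to $s > 4n/11$. Therefore $S(X') \in \{\{2,4\}, \{2,5\}\}$.

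From this starting configuration, the plan is to iterate by repeatedly applying Corollary~\ref{cor:n} with carefully chosen edges of $\cF$ (pentagon edges, or edges of the form $Z T$ produced earlier) and auxiliary vertices, thereby constructing further elements of $V(\cF)$ whose pairwise adjacencies are severely constrained by triangle-freeness of $\cF$ and by the ``$|S(\cdot)| \in \{0,2\}$'' hypothesis. The goal is to reach a configuration which is impossible: either some vertex $X^* \in V(\cF)$ is forced to satisfy $|S(X^*)| = 1$, contradicting our assumption directly, or a triangle in $\cF$ appears, contradicting Fact~\ref{fact:c3}.

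The hard part will be closing this chain of constraints. Corollary~\ref{cor:n} and the lemmas of Section~3 naturally produce vertices with two pentagon neighbours (the forbidden $|S|=1$ case is precisely what we are trying to produce), so avoiding circularity requires a delicate exploitation of the mould structure -- specifically the inclusions $B_v \subseteq A_{v-1} \cap A_{v+1}$ together with $|B_v| = 3s - n$. I expect that the contradiction will ultimately come from a single well-chosen application of Lemma~\ref{cl:1} in which $C$ is engineered as a union of some $B_v$'s with additional vertices drawn from appropriate differences $A_u \setminus A_{u'}$, so that the forced alternatives for $Z$ are incompatible both with the fortress's triangle-freeness and with every case-$2$ possibility for $S(Z)$.
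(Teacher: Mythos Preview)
Your concrete steps up through the construction of $Z$ (with $S(Z)=\{1,3\}$) and $X'$ (with $S(X')\in\{\{2,4\},\{2,5\}\}$) are correct, including the volume bound ruling out $S(X')=\vn$. But the proof is not finished: from ``the plan is to iterate'' onward you only describe a hope, and you explicitly concede that ``the hard part will be closing this chain of constraints.'' That hard part is the entire lemma. Each application of Corollary~\ref{cor:n} or Lemma~\ref{cl:1} hands you one new adjacency, and under your contradiction hypothesis the adversary immediately upgrades it to two; there is no evident mechanism by which this iteration terminates in a forced $|S|=1$ or a triangle, and you have not supplied one. The speculative final sentence about engineering $C$ as a union of $B_v$'s with extra vertices is not an argument either --- note for instance that two $B_v$'s only give $2(3s-n)<4n-10s$, while any three of them span an edge in the $\Gamma_2$-blow-up (Fact~\ref{f:use}), so the construction you gesture at is not straightforward.

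The paper's proof takes an entirely different route and avoids this iteration altogether. It deletes one vertex $b_i$ from each $B_i$, applies minimality in the form $\ex(n-5,s-2)\le g_4(n-5,s-2)<e(G)-(5s-5)$ to force an independent set $Q$ of size $s-1$ or $s$ in the smaller graph, and then analyses $I=\{i:B_i\cap Q\ne\vn\}$. One sees $|I|\le 1$; if $|I|=1$ the set $Q\cup\{b_i\}$ is the desired fortress vertex with a unique pentagon neighbour. The case $I=\vn$ is handled by a separate short argument pinning down $\delta=1$ and then exhibiting the required $X$ as the neighbourhood of a high-degree vertex with at most one neighbour among the $b_i$. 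This is a direct construction rather than a contradiction, and it uses the minimality of $(n,s)$ through the pair $(n-5,s-2)$ rather than through the pair $(n+3,s+1)$ underlying Lemma~\ref{cl:1}.
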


\begin{proof}
	Put $\varphi(i)=A_i$ and $\psi(i)= B_i$ for every $i\in V(\G_2) = \ZZ/5\ZZ$.
	Recall that $A_i\in V(\cF)$,
	while $B_i$ is an independent set of size $3s-n$ such 
	that $K(A_i, B_i)\subseteq E(G)$. Moreover, a pair $ij\in [5]^{(2)}$
	satisfies $A_iA_j\in E(\cF)$ if and only if $|i-j|\in \{2,3\}$.
 
	Pick arbitrary vertices $b_i\in B_i$ and let $G^-$ be the graph obtained 
	from $G$ by deleting these five vertices $b_i$. 
	Because of $\delta(n-5, s-2)=\delta-2$, the minimality of $(n, s)$, and 
	Fact~\ref{f:use} we have 		
		\begin{align*}
		\ex(n-5, s-2) 
		&\le 
		g_4(n-5, s-2)=g_4(n,s)+4n-16s+6\\
		&<
		\ex(n,s) -\delta - 5s + 6 \le \ex(n,s) - (5s-5) \\
		&=
		e(G) - (5s-5) = e(G^-)\,,
	\end{align*}
		which together with $\nu(G^-)=n-5$ shows $\alpha(G^-)\ge s-1$. 
	Let $Q$ be an independent set in $G^-$ of 
	size $|Q|=\alpha(G^-)\in\{s-1, s\}$ and 
	put $I=\{i\in \ZZ/5\ZZ\colon B_i\cap Q\neq \vn \}$. 
	As $Q^+ = Q\cup \{b_i\colon i\in I\}$ is independent in $G$, 
	we have $|Q|+|I|=|Q^+|\le s$, whence~$|I|\le 1$. 
	
	Suppose first that $|I|=1$, which guarantees $|Q^+|=s$. Moreover, if 
	$i$ denotes the unique element of $I$, Fact~\ref{f:45} tells us that~$Q^+$ 
	is adjacent in~$\cF$ to~$A_i$ but to none of the sets $A_j$ with $j\ne i$, 
	so that~$Q^+$ has the desired property.  
	
	It remains to consider the case $I=\vn$. By Lemma~\ref{cl:62} there exists 
	an independent set $X$ in $G$ of size $s$ with $Q\cap X=\vn$. Each of the 
	sets $B_i$ is either contained in $X$ or disjoint to~$X$ (by Fact~\ref{f:45})
	and, in fact, at most $\alpha(\G_2)=2$ of them can be subsets of $X$.
	Thus~$Q$,~$X$, and three of the sets $B_i$ are mutually disjoint subsets 
	of $V(G)$, wherefore
		\[
		n\ge|Q|+|X|+3(3s-n) \ge (s-1) + s + (9s-3n) = n+\delta-1\ge n\,.
	\]
		Now equality needs to hold throughout, which means that 
		\[
		\alpha(G^-)=|Q|=s-1
		\quad \text{  and } \quad 11s-4n=\delta=1\,.
	\]
		In particular, $s$ is odd.
	Next we look at the set 
		\[
		C = \{v\in V(G)\colon |\Nn_G(v)\cap \{b_1, \dots, b_5\}|\le 1\}\,.
	\]
		
	Since every vertex $v\in V(G)\setminus C$ belongs 
	to exactly two of the sets $A_1,\dots, A_5$, 
	we have $|V(G)\setminus C|\le \lfloor 5s/2\rfloor=(5s-1)/2=(2n-3s)$
	and, hence, $|C| \ge n-(2n-3s)=3s-n$. 
	Thus Lemma~\ref{lem:q} yields some $c\in C$ 
	such that $\deg_G(c)\ge 4n-10s+1=s$.
	Now $Z=\Nn_G(c)$ is an independent set of size $s$. Due to $\alpha(G^-)=s-1$
	it needs to contain at least one the vertices $b_i$ and combined with 
	the definition of $C$ this shows that there is a unique vertex 
	in $Z\cap \{b_1,\dots, b_5\}$. If $b_i$ denotes that vertex, then 
	$\Nn_\cF(Z)\cap \{A_1, A_2, A_3, A_4, A_5\}=\{A_i\}$ and, consequently,~$Z$
	is as desired.
\end{proof}

Before going any further we state two more facts on moulds for graphs 
in $\Ex$, both of which rely on space limitations caused by $s>\frac4{11}n$.

\begin{fact}\label{f:edge}
	 Let $G\in \Ex$ be an extremal graph, where $s>\frac 4{11}n$.
	 Suppose further that~$(\varphi,\psi)$ is an $H$-mould in $G$
	 for some graph $H$. 
	 If $XY$ denotes an arbitrary edge of $\cF$ and $i, j, k\in V(H)$ 
	 are distinct, then some edge of $\cF$ connects a vertex from  
	 $\{X,Y\}$ with a vertex from $\{\varphi(i), \varphi(j), \varphi(k)\}$. 
\end{fact}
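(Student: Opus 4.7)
The plan is to argue by contradiction. Suppose that no edge of $\cF$ connects $\{X,Y\}$ with $\{\varphi(i),\varphi(j),\varphi(k)\}$. Then each of the six pairs $X\varphi(\l)$ and $Y\varphi(\l)$ for $\l\in\{i,j,k\}$ fails to be an edge of $\cF$, which means that each such pair consists of two intersecting independent sets of size $s$.

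The crucial step is to translate this intersection data into disjointness information for the sets $\psi(\l)$ via Fact~\ref{f:45}. The contrapositive of the equivalence \ref{it:45ii}$\Leftrightarrow$\ref{it:45iii} gives $\varphi(\l)\cap X\neq\vn\Rightarrow\psi(\l)\cap X=\vn$, and analogously for $Y$. Hence $X$ and $Y$ are each disjoint from all three sets $\psi(i)$, $\psi(j)$, $\psi(k)$, which are themselves pairwise disjoint by Fact~\ref{f:00}. Since $X$ and $Y$ are also disjoint (because $XY\in E(\cF)$), I have produced five mutually disjoint subsets of $V(G)$.

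A cardinality count then delivers the contradiction:
\[
    n\;\ge\;|X|+|Y|+|\psi(i)|+|\psi(j)|+|\psi(k)|\;=\;2s+3(3s-n)\;=\;11s-3n\,,
\]
which rearranges to $s\le 4n/11$, contradicting the hypothesis $s>\tfrac4{11}n$. I do not foresee any real obstacle here: the statement is essentially a space-limitation argument, and all the necessary ingredients (Facts~\ref{f:00} and~\ref{f:45}, together with the fact $|\psi(\l)|=3s-n$ from~\ref{it:h3}) are already at hand.
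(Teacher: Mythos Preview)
Your proof is correct and follows essentially the same approach as the paper: both arguments rest on the observation that $|X|+|Y|+|\psi(i)|+|\psi(j)|+|\psi(k)|=11s-3n>n$, combined with Facts~\ref{f:00} and~\ref{f:45}. The only difference is cosmetic---the paper argues directly (the five sets cannot be mutually disjoint, so some $X$ or $Y$ meets some $\psi(\l)$, hence is adjacent to $\varphi(\l)$), whereas you phrase the same thing contrapositively.
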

\begin{proof}
	In view of 
		\[
		|X|+|Y|+|\psi(i)|+|\psi(j)|+|\psi(k)| = 2s + 3(3s-n) = 11s-3n > n
	\]
		the five sets $X$, $Y$, $\psi(i)$, $\psi(j)$, $\psi(k)$ cannot be mutually 
	disjoint. But $XY\in E(\cF)$ means that the first two sets are disjoint and 
	Fact~\ref{f:00} informs us that the last three sets are disjoint. 
	So without loss of generality $X\cap \psi(i)\ne\vn$, which due to 
	Fact~\ref{f:45} implies $X\phi(i)\in E(\cF)$.
\end{proof}

\begin{fact}\label{f:claw}
	If $s>\frac4{11}n$ and $(\phi, \psi)$ denotes a $K_{1,3}$-mould in 
	some graph $G\in \Ex$, then $\{\phi(v)\colon v\in V(K_{1, 3})\}$ dominates $V(\cF)$.
\end{fact}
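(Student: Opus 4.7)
The plan is to argue by contradiction, so I will suppose that some $Z\in V(\cF)$ is not dominated by $\{\phi(c),\phi(v_1),\phi(v_2),\phi(v_3)\}$, where $c$ denotes the centre and $v_1,v_2,v_3$ the leaves of $K_{1,3}$. Being undominated forces $Z\notin\{\phi(v)\colon v\in V(K_{1,3})\}$ and $Z\cap\phi(v)\neq\vn$ for every $v\in V(K_{1,3})$; from this I aim to derive $11s\le 4n$, contradicting $s>\tfrac{4}{11}n$.

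The structural observation I plan to exploit is the inclusion $\psi(v_i)\subseteq\phi(c)$ for $i=1,2,3$. This follows directly from the mould: by Fact~\ref{f:use} the sets $\psi(c)$ and $\psi(v_i)$ are completely adjacent in $G$ (as $cv_i\in E(K_{1,3})$), and by~\ref{it:h4} every $z\in\psi(c)$ satisfies $\Nn(z)=\phi(c)$, so the whole $\psi(v_i)$ sits inside $\phi(c)$. Since the three sets $\psi(v_i)$ are pairwise disjoint by Fact~\ref{f:00} and each has size $3s-n$, they take up $9s-3n$ of the $s$ vertices of $\phi(c)$, leaving $|\phi(c)\setminus(\psi(v_1)\cup\psi(v_2)\cup\psi(v_3))|=3n-8s$.

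The contradiction will then come from two matching bounds on $|Z\cap\phi(c)|$. For the upper bound I apply Fact~\ref{f:45} to each $v\in V(K_{1,3})$: the hypothesis $Z\cap\phi(v)\neq\vn$ yields $Z\cap\psi(v)=\vn$, so in particular $Z\cap\phi(c)$ avoids $\psi(v_1)\cup\psi(v_2)\cup\psi(v_3)$, giving $|Z\cap\phi(c)|\le 3n-8s$. For the lower bound I carry out a Zykov-style swap: the set $Z':=(Z\setminus\phi(c))\cup\psi(c)$ is independent, because no edge can connect $Z\setminus\phi(c)$ to $\psi(c)$ (whose common neighbourhood is exactly $\phi(c)$), and $|Z'|\le\alpha(G)=s$ translates into $|Z\cap\phi(c)|\ge 3s-n$. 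Combining gives $3s-n\le 3n-8s$, i.e.\ $11s\le 4n$, the desired contradiction.

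I do not anticipate a serious obstacle; the whole argument is a short double count of $|Z\cap\phi(c)|$ once the containments $\psi(v_i)\subseteq\phi(c)$ have been recognised. The one point worth flagging is that the naive volume bound on the five pairwise disjoint sets $Z,\psi(c),\psi(v_1),\psi(v_2),\psi(v_3)$, of total size $13s-4n$, is too weak in our range, since $s<3n/8$ keeps this quantity below $n$; it is genuinely necessary to first squeeze the three $\psi(v_i)$'s into $\phi(c)$ before counting.
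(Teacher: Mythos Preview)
Your proposal is correct and follows essentially the same approach as the paper: both arguments establish $\psi(v_i)\subseteq\phi(c)$, obtain the lower bound $|Z\cap\phi(c)|\ge 3s-n$ via the swap $(Z\setminus\phi(c))\cup\psi(c)$, and then derive a contradiction from the pigeonhole count inside $\phi(c)$ (the paper writes it as $4(3s-n)>s$, you as $3s-n\le 3n-8s$). The only difference is cosmetic: the paper assumes merely that $Z$ is non-adjacent to the centre and proves directly that $Z$ must hit some $\psi(v_i)$, whereas you assume $Z$ is non-adjacent to everything and reach a numerical contradiction.
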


\begin{proof}
	Let $\{x\}$ and $\{u, v, w\}$ be the vertex classes of $K_{1, 3}$. 
	Consider an arbitrary fortress vertex $Z\in V(\cF)$ that is non-adjacent 
	to $\phi(x)$. We need to prove that at least one 
	of $\phi(u)$, $\phi(v)$, or $\phi(w)$ is a neighbour of $Z$.
	
	Fact~\ref{f:45} tells us $Z\cap \psi(x)=\vn$ and together with the 
	independence of $(Z\setminus \phi(x))\cup \psi(x)$ this yields 
	$|Z\cap \phi(x)|\ge |\psi(x)|\ge 3s-n$. Now $Z\cap \phi(x)$, $\psi(u)$, 
	$\psi(v)$, and $\psi(w)$ are four subsets of $\phi(x)$ whose cardinalities 
	sum up to at least $4(3s-n)>s$. Hence, these sets cannot be mutually disjoint
	and Fact~\ref{f:00} allows us to suppose, without loss of generality, that
	$Z\cap \psi(u)\ne\vn$. But now $Z\phi(u)\in E(\cF)$ is immediate from  
	Fact~\ref{f:45}. 
\end{proof}

The final result of this section improves upon Corollary~\ref{cor:210}. 

\begin{lemma}\label{lem:61}
	If $(n, s)$ is a minimal counterexample, then no graph in $\Ex$ contains 
	a $\G_2$-imprint.  
\end{lemma}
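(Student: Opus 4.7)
Assume for contradiction that some $G\in \Ex$ contains a $\G_2$-imprint. By Lemma~\ref{lem:43} there is then a graph $G^{\star}\in \Ex$ that carries a full $\G_2$-mould $(\varphi,\psi)$; we write $A_i=\varphi(i)$ and $B_i=\psi(i)$ for $i\in \ZZ/5\ZZ$. Lemma~\ref{lem:62a} supplies a further fortress vertex $X\in V(\cF_{G^{\star}})$ adjacent to exactly one of the sets $A_i$, which by symmetry we may take to be $A_3$. Fact~\ref{f:45} then gives $B_3\subseteq X$ and $X\cap B_j=\vn$ for $j\neq 3$, and since the pentagon-neighbours of $A_3$ in $\cF_{G^{\star}}$ are $A_1$ and $A_5$, the three sets $A_1,A_5,X$ are pairwise non-adjacent in $\cF_{G^{\star}}$ and commonly adjacent to $A_3$, producing a $K_{1,3}$-imprint centred at $A_3$.

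The strategy is to extend the six-vertex configuration $(A_1,\ldots,A_5,X)$ to a full $\G_3$-imprint in the fortress of some graph in $\Ex$; Corollary~\ref{cor:210} will then contradict the minimality of $(n,s)$. Identifying $\G_3$ with $\ZZ/8\ZZ$ under differences $\{\pm 3,4\}$, one checks that the vertices $0,3,6,2,5$ form a pentagon; of the remaining vertices $1,4,7$, vertex $4$ has the unique pentagon-neighbour $0$, while $1$ and $7$ each have two pentagon-neighbours (at pentagon-distance two), are both adjacent to $4$, and are non-adjacent to each other. Matching $X$ with vertex $4$ and $A_3$ with $0$ (so that $\{A_1,A_5\}$ corresponds to $\{3,5\}$, $A_4$ to $6$ and $A_2$ to $2$), the task reduces to locating two further fortress vertices $Y,Z\in V(\cF_{G^{\star}})$ such that, within $\{A_1,\ldots,A_5,X\}$, the vertex $Y$ is adjacent exactly to $A_4,A_5,X$ and $Z$ exactly to $A_1,A_2,X$, and additionally $YZ\notin E(\cF_{G^{\star}})$.

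To produce $Y$ (the construction of $Z$ is symmetric), I plan to apply Corollary~\ref{cor:n} to the fortress edge $XA_3$ together with a well-chosen reference vertex, yielding a fortress vertex $T$ with controlled adjacencies to $X$, $A_3$ and the reference vertex; triangle-freeness of the fortress (Fact~\ref{fact:c3}) rules out certain incompatible edge combinations (for instance, it precludes any candidate being a common neighbour of $X$ and $A_3$). Fact~\ref{f:edge}, applied to the newly produced fortress edges together with three-element subsets of mould positions, will extract further mandatory adjacencies of $T$ to $\{A_1,\ldots,A_5\}$. Any residual \emph{unwanted} adjacencies of $T$ within $\{A_1,\ldots,A_5\}$ that remain after these constraints can then be trimmed by Zykov symmetrisation (Corollary~\ref{lem:B'}), in the spirit of the proof of Lemma~\ref{lem:43}: one passes to a new graph in $\Ex$ preserving the $\G_2$-mould, the vertex $X$ and the candidate $T$, while rearranging the ambient graph so as to remove the surplus edge in the new fortress.

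The principal obstacle will be to ensure that the vertices returned by Corollary~\ref{cor:n} and Lemma~\ref{cl:1} are (i) genuinely distinct from $A_1,\ldots,A_5,X$, and (ii) after the symmetrisation cleanup, carry \emph{exactly} the adjacency pattern dictated by $\G_3$ rather than some partial variant. Both points require delicate choices of reference vertex for Corollary~\ref{cor:n}, so as to preclude the degenerate solutions in which the returned vertex already lies inside the current mould, and a careful bookkeeping argument showing that each symmetrisation step preserves the previously arranged pentagon-mould together with the key additional vertices. Once both $Y$ and $Z$ are in place with the correct adjacency pattern and $YZ\notin E(\cF_{G^{\star}})$ is verified (via the common adjacency of $Y$ and $Z$ to $X$ combined with Fact~\ref{fact:c3}), the eight-vertex configuration $(A_1,\ldots,A_5,X,Y,Z)$ constitutes the desired $\G_3$-imprint and Corollary~\ref{cor:210} completes the proof.
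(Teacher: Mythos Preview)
Your high-level architecture is correct and matches the paper: upgrade to a $\Gamma_2$-mould via Lemma~\ref{lem:43}, obtain the extra vertex $X$ from Lemma~\ref{lem:62a}, then search for two further fortress vertices completing a $\Gamma_3$-imprint so that Corollary~\ref{cor:210} yields the contradiction. The target adjacency patterns you write down for $Y$ and $Z$ are right, and the remark that $YZ\notin E(\cF)$ follows from triangle-freeness once both are adjacent to $X$ is fine. But what you have submitted is a plan rather than a proof, and the plan underestimates the real obstacles. Applying Corollary~\ref{cor:n} to the edge $XA_3$ does not force the output to be adjacent to $X$ rather than to $A_3$, and it does not exclude degenerate outputs already lying in $\{A_1,\dots,A_5\}$ (for instance, with reference vertex $A_4$ the choice $T=A_1$ satisfies all requirements). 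The paper avoids this by applying Corollary~\ref{cor:n} to the edge $A_2A_4$ with \emph{reference} vertex $X$, so that adjacency to $X$ is built in and no pentagon vertex qualifies. More seriously, the role of symmetrisation here is not to ``trim unwanted adjacencies'' of a candidate; it is to \emph{promote} $X$ and the first new vertex $A_7$ to full mould status (equipping them with sets $B_6,B_7$), so that Fact~\ref{f:edge} and the claw fact Fact~\ref{f:claw}---which you do not mention at all---can later be invoked with these vertices among the three mould positions.

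Your assertion that the constructions of $Y$ and $Z$ are symmetric is where the plan breaks down. In the paper the first new vertex ($A_7\approx$ your $Z$) comes from Corollary~\ref{cor:n} and Fact~\ref{f:edge}; then two carefully designed symmetrisations produce graphs $G_1,G_2,G_3\in\Ex$ in which $A_1A_7A_2A_5A_3$ is a \emph{new} pentagon carrying a mould. The second new vertex ($A_8\approx$ your $Y$) is obtained by a \emph{second} appeal to Lemma~\ref{lem:62a} on that pentagon, after which Fact~\ref{f:claw}---applied in both $\cF_2$ and $\cF_3$---is needed to force its unique pentagon-neighbour to be $A_5$. The hardest single step, forcing the edge $A_6A_8$ (your $XY$), requires playing $G_2$ and $G_3$ off against each other: one symmetrisation keeps $A_4$ in the fortress but loses mould control over $A_6$, while the other does the reverse, and only the interplay of both yields $\wt{B}_6\cap A_8\neq\vn$ and hence $A_6A_8\in E(\cF_3)$. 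None of this is anticipated by your outline, and it cannot be recovered from the tools you list without substantial additional argument.
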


\begin{proof} 	Throughout the argument we shall encounter four  
	graphs $G_0, G_1, G_2, G_3\in \Ex$ and without further notice we shall 
	always denote the fortress of $G_j$ by $\cF_j$. Roughly speaking, our 
	strategy is to show that starting with a $\Gamma_2$-mould one can obtain 
	a $\Gamma_3$-imprint by means of two symmetrisation steps. 
	
	\smallskip
	
	\noindent {\bf Stage A: The graph $G_0$.} 
	Assuming that our claim fails Lemma~\ref{lem:43} yields a 
	graph $G_0\in \Ex$ containing a $\G_2$-mould $(\varphi, \psi)$. 
	Set $A_i=\varphi(i)$ and $B_i=\psi(i)$ for every $i\in V(\G_2)$.
	By Lemma~\ref{lem:62a} some vertex $A_6\in V(\cF_0)$ is adjacent 
	to exactly one of the sets~$A_i$, say to~$A_3$. Now 
	Figure~\ref{fig:1-6a} shows an induced subgraph of $\cF_0$.
	Due to Fact~\ref{f:45} we have $B_3\subseteq A_6$, while 
	$B_1$, $B_2$, $B_4$, $B_5$ are disjoint to $A_6$. 	
	
			\begin{figure}[ht]
	\centering
	
	\begin{subfigure}[b]{0.32\textwidth}
		\centering

	\begin{tikzpicture}[scale=.6]
		
		\coordinate (a1) at (90:2);
		\coordinate (a5) at (162:2);
		\coordinate (a4) at (234:2);
		\coordinate (a3) at (306:2);
		\coordinate (a2) at (18:2);
		\coordinate (a6) at (-20:3.5);
		\coordinate (a7) at (18:3);
		\coordinate (a8) at (225:3.5);

		\draw (a1)--(a3)--(a5)--(a2)--(a4)--cycle;
				\draw (a3)--(a6);
		
		\foreach \i in {1,...,5}\draw [fill=red!50, thick] (a\i) circle (5pt);
		\fill (a6) circle (3pt);
		
		\node [left] at (a1) {$A_1$};
		\node [right] at (a2) {$A_2$};
		\node [left] at (a3) {$A_3$};
		\node [left] at (a4) {$A_4$};
		\node [left] at (a5) {$A_5$};
		\node at ($(a6)+(.45,-.1)$) {$A_6$};

	\end{tikzpicture}
	
	\caption{}
		\label{fig:1-6a}
	
\end{subfigure}
\hfill    
\begin{subfigure}[b]{0.32\textwidth}
	\centering
	
	\begin{tikzpicture}[scale=.6]
	
	\coordinate (a1) at (90:2);
	\coordinate (a5) at (162:2);
	\coordinate (a4) at (234:2);
	\coordinate (a3) at (306:2);
	\coordinate (a2) at (18:2);
	\coordinate (a6) at (-20:3.5);
	\coordinate (a7) at (25:3.6);
	\coordinate (a8) at (225:3.5);

	\fill (a7) circle (3pt);
	
	\draw (a1)--(a3)--(a5)--(a2)--(a4)--cycle;
	\draw (a6)--(a7);
		\draw (a3)--(a6);
	\draw (a2)--(a7);
	
	\draw [blue, thick] (a2)--(a4);
	\foreach \i in {1,...,5}\draw [fill=red!50, thick] (a\i) circle (5pt);
	\fill [blue] (a6) circle (3.5pt);
	
	\node [left] at (a1) {$A_1$};
	\node at ($(a2)+(.45,-.2)$) {$A_2$};
	\node [left] at (a3) {$A_3$};
	\node [left] at (a4) {$A_4$};
	\node [left] at (a5) {$A_5$};
	\node at ($(a6)+(.45,-.1)$) {$A_6$};
\node at ($(a7)+(.45,-.1)$) {$A_7$};
		
		\end{tikzpicture}
	
	\caption{}
	\label{fig:1-6b}

\end{subfigure}
\hfill    
\begin{subfigure}[b]{0.32\textwidth}
	\centering

	\begin{tikzpicture}[scale=.6]
	
	\coordinate (a1) at (90:2);
	\coordinate (a5) at (162:2);
	\coordinate (a4) at (234:2);
	\coordinate (a3) at (306:2);
	\coordinate (a2) at (18:2);
	\coordinate (a6) at (-20:3.5);
	\coordinate (a7) at (25:3.6);
	\coordinate (a8) at (225:3.5);
	
	\draw (a1)--(a3)--(a5)--(a2)--(a4)--cycle;
	\draw (a6)--(a7)--(a1);
		\draw (a3)--(a6);
	\draw (a2)--(a7);
	
		\foreach \i in {1,...,7}\fill (a\i) circle (3pt);
		\foreach \i in {1,...,5}\draw [fill=red!50, thick] (a\i) circle (5pt);
	
	\node [left] at (a1) {$A_1$};
	\node at ($(a2)+(.45,-.2)$) {$A_2$};
	\node [left] at (a3) {$A_3$};
	\node [left] at (a4) {$A_4$};
	\node [left] at (a5) {$A_5$};
	\node at ($(a6)+(.45,-.1)$) {$A_6$};
	\node at ($(a7)+(.45,-.1)$) {$A_7$};
		
	\end{tikzpicture}
	
	\caption{}
	\label{fig:1-6c}
	
	\end{subfigure}
	\caption{Extending a $\G_2$-mould in $\cF_0$. Large \re{red} dots indicate 
	vertices~$A_i$ of $\cF_0$ for which there is a set $B_i$ such that $|B_i|=3s-n$  
	and $K(A_i,B_i)\subseteq E(G)$.}
	\label{fig:1-6t}
\end{figure}
 	
	By Corollary~\ref{cor:n} applied to the edge $A_2A_4$ and the vertex $A_6$ 
	there exists some $A_7\in V(\cF_0)$ such that, without loss of generality, 
	$A_2A_7$ and $A_6A_7$ are edges of $\cF_0$ (see Figure \ref{fig:1-6b}). 
	Next we apply Fact~\ref{f:edge} to the edge $A_6A_7$ and the vertices 
	$A_1$, $A_4$, $A_5$ of $\cF_0$. Since $\cF_0$ is triangle-free, the only 
	possible outcome is $A_1A_7\in E(\cF_0)$ and Figure~\ref{fig:1-6c}
	shows an induced subgraph of~$\cF_0$. 
	
	\smallskip
	
	\noindent {\bf Stage B: The graph $G_1$.}
	Corollary~\ref{lem:B'} leads to the existence of a set $B_7\subseteq A_6$ of 
	size $|B_7|=3s-n$ such that $G_1=\GZS(G_0|A_7, B_7)$ is again in $\Ex$. 
	It turns out that everything we know about $\cF_0$ remains valid for $\cF_1$.
	In particular, $A_6$ is still independent in $G_1$. Moreover, we contend that
	\begin{equation}\label{eq:G1}
			K(A_i, B_i) \subseteq E(G_1)
			\quad \text{ for every $i\in [7]\setminus \{6\}$}\,,
	\end{equation}
	which in turn implies that $A_i$ and $B_i$ are independent in $G_1$ as well.	
	The inclusion~\eqref{eq:G1} is completely clear for $i=7$. 
	The edges $A_1A_7$, $A_2A_7$ show $B_7\subseteq A_1\cap A_2$
	as well as $(B_1\cup B_2)\subseteq A_7$, and thus~\eqref{eq:G1}
	holds for $i=1, 2$ as well. 
	
	In order to take care of the three remaining cases, it suffices to 
	show $B_7\cap (A_i\cup B_i)=\vn$ for $i=3, 4, 5$. Since $B_7$ is contained
	in $A_1$, it is indeed disjoint to $A_3$, $A_4$, and $B_5$ and, similarly, 
	$B_7\subseteq A_2$ yields $B_7\cap A_5=B_7\cap B_3=\vn$. 
	Finally, $B_7\subseteq A_6$ implies $B_7\cap B_4=\vn$.  
	
		\begin{figure}[ht]
	\centering
	
	\begin{tikzpicture}[scale=.7]
	
	\coordinate (a1) at (90:2);
	\coordinate (a5) at (162:2);
	\coordinate (a4) at (234:2);
	\coordinate (a3) at (306:2);
	\coordinate (a2) at (18:2);
	\coordinate (a6) at (-20:3.5);
	\coordinate (a7) at (25:3.6);
	\coordinate (a8) at (225:3.5);

	\foreach \i in {1,...,7}\fill (a\i) circle (2pt);
	
	\draw (a1)--(a3)--(a5)--(a2)--(a4)--cycle;
	\draw (a6)--(a7)--(a1);
		\draw (a3)--(a6);
	\draw (a2)--(a7);
	
		\foreach \i in {1,...,7}\fill (a\i) circle (3pt);
		\foreach \i in {1,...,5,7}\draw [fill=red!50, thick] (a\i) circle (5pt);
	
	\node [left] at (a1) {$A_1$};
	\node at ($(a2)+(.45,-.2)$) {$A_2$};
	\node [left] at (a3) {$A_3$};
	\node [left] at (a4) {$A_4$};
	\node [left] at (a5) {$A_5$};
	\node at ($(a6)+(.45,-.1)$) {$A_6$};
\node at ($(a7)+(.45,-.2)$) {$A_7$};
		
	\end{tikzpicture}

	\caption{A common subgraph of $\cF_1$ and $\cF_3$.
	}
	\label{fig:2}
\end{figure}
 	
	\noindent {\bf Stage C: The graphs $G_2$ and $G_3$.}
	Let us now consider a matching $M$ between $A_7$ 
	and $V(G_1)\setminus(A_6\cup A_7)$ such that 	
		\begin{enumerate}
		\item[$\bullet$] $|E(M)|$ is maximum;
		\item[$\bullet$] and, subject to this, $|V(M)\cap B_1|$ is maximum.
	\end{enumerate}
		Using $B_1\subseteq A_7$, $B_4\subseteq V(G_1)\setminus(A_6\cup A_7)$,
	and $K(B_1, B_4)\subseteq E(G_1)$ one can easily see that~$M$ covers $B_1$. 
	By Lemma~\ref{lem:B''}, if $B_6\subseteq A_7\setminus V(M)$ denotes any set of 
	size $3s-n$, then $G_2=\GZS(G_1| A_6, B_6)$ is again in $\Ex$.
	Arguing as in the previous stage, one can show that with the possible 
	exception of $A_4$ everything we know about $\cF_1$ survives in $\cF_2$. 
	More precisely, we shall show 
	 
	\begin{equation}\label{eq:G2}
			K(A_i, B_i) \subseteq E(G_2)
			\quad \text{ for every $i\in [7]\setminus \{4\}$}\,,
	\end{equation}
	for which reasons these sets $A_i$, $B_i$ are still independent in $G_2$. 
	
	\begin{figure}[ht]
	\centering
	
	\begin{tikzpicture}[scale=.7]
	
	\coordinate (a1) at (90:2);
	\coordinate (a5) at (162:2);
	\coordinate (a4) at (234:2);
	\coordinate (a3) at (306:2);
	\coordinate (a2) at (18:2);
	\coordinate (a6) at (-20:3.5);
	\coordinate (a7) at (25:3.6);
	\coordinate (a8) at (225:3.5);

	\draw (a1)--(a3)--(a5)--(a2);
	\draw (a6)--(a7)--(a1);
		\draw (a3)--(a6);
	\draw (a2)--(a7);

		\foreach \i in {1,...,3,5,6,7}\draw [fill=red!50, thick] (a\i) circle (5pt);
	
	\node [left] at (a1) {$A_1$};
	\node at ($(a2)+(.45,-.2)$) {$A_2$};
	\node [left] at (a3) {$A_3$};
	\node [left] at (a5) {$A_5$};
	\node at ($(a6)+(.45,-.2)$) {$A_6$};
\node at ($(a7)+(.45,-.2)$) {$A_7$};
		
	\end{tikzpicture}

	\caption{A subgraph of $\cF_2$.}
	\label{fig:3}
\end{figure}

	This time, the case $i=6$ is obvious and the edges $A_3A_6$, $A_6A_7$
	take care of the cases $i=3, 7$. 
		\begin{center}
	\begin{tabular}{c|c}
		Furthermore & shows that $B_6$ is disjoint to \\
		\hline
		$B_6\subseteq A_3$ &  $A_1$, $A_5$, $B_2$\\
		$B_6\subseteq A_7$ &  $A_2$, $B_4$, $B_5$.  
	\end{tabular}
	\end{center}
		Since our special choice of $M$ enforces $B_1\cap B_6=\vn$ we thus 
	have $B_6\cap (A_i\cup B_i)=\vn$ for $i=1, 2, 5$ and the 
	remaining three cases of~\eqref{eq:G2} have thereby been proved as well.

	The loss of control over $A_4$ is a severe problem. 
	We address this situation by setting $\wt{B}_6=B_6\sm A_4$
	and working mainly with the graph $G_3=\GZS(G_1|A_6, \wt{B}_6)$,
	which by Lemma~\ref{lem:B''} belongs to $\Ex$, too. 
	Due to $\wt{B}_6\cap (A_4\cup B_4)=\vn$ the set $A_4$ is still 
	independent in $G_3$ and the usual arguments show
		\begin{equation}\label{eq:G3}
			K(A_i, B_i) \subseteq E(G_3)
			\quad \text{ for every $i\in [7]\setminus \{6\}$}.
	\end{equation}
		
	So, roughly speaking, $G_3$ inherits all useful properties of $G_1$. 
	In fact, at this moment it is not straightforward to say what the advantage 
	of $G_3$ over $G_1$ is, or whether these graphs are actually distinct 
	from one another. This will only become apparent towards the very end 
	of the proof.  
	
	\smallskip
	
	\noindent {\bf Stage D: The eighth vertex.} 
	Now $A_1A_7A_2A_5A_3$ is a pentagon in $\cF_3$ and together with the 
	sets $B_1$, $B_7$, $B_2$, $B_5$, $B_3$ it forms a $\G_2$-mould for $G_3$
	(see Figure~\ref{fig:1-6d}).
	
		\begin{figure}[ht]
	\centering
	
		\begin{subfigure}[b]{0.49\textwidth}
		\centering

	\begin{tikzpicture}[scale=.7]
		
		\coordinate (a1) at (90:2);
		\coordinate (a5) at (162:2);
		\coordinate (a4) at (234:2);
		\coordinate (a3) at (306:2);
		\coordinate (a2) at (18:2);
		\coordinate (a6) at (-20:3.5);
			\coordinate (a7) at (25:3.6);
		\coordinate (a8) at (225:3.5);

		\foreach \i in {1,...,7}\fill (a\i) circle (3pt);
		
		\draw (a1)--(a3)--(a5)--(a2)--(a4)--cycle;
		\draw (a6)--(a7)--(a1);
			\draw (a3)--(a6);
		\draw (a2)--(a7);
		
			\draw[blue, thick] (a1)--(a3)--(a5)--(a2)--(a7)--cycle;

			\foreach \i in {1, 2, 3, 4, 5, 7}[red]\draw [fill=red!50, thick] (a\i) circle (5pt);

		\node [left] at (a1) {$A_1$};
		\node at ($(a2)+(.45,-.2)$) {$A_2$};
		\node [left] at (a3) {$A_3$};
		\node [left] at (a4) {$A_4$};
		\node [left] at (a5) {$A_5$};
			\node at ($(a6)+(.45,-.1)$) {$A_6$};
		\node at ($(a7)+(.45,-.2)$) {$A_7$};

	\end{tikzpicture}

\end{subfigure}
\hfill    
\begin{subfigure}[b]{0.49\textwidth}
\centering
\begin{tikzpicture}[scale=.6]
	
	\coordinate (a1) at (90:2.5);
	\coordinate (a5) at (162:2.5);
	\coordinate (a4) at (234:2.5);
	\coordinate (a3) at (306:2.5);
	\coordinate (a2) at (18:2.5);
	\coordinate (a6) at (140:.8);
	\coordinate (a7) at (-30:.5);
	\coordinate (a8) at (189:3.5);

	\foreach \i in {1,...,7}\fill (a\i) circle (3pt);

	\draw (a5)--(a6)--(a2);
	\draw (a1)--(a7) -- (a3);

	\draw[blue, thick] (a1)--(a5)--(a4)--(a3)--(a2)--cycle;

		\foreach \i in {1, 2, 3, 4, 5, 7}\draw [fill=red!50, thick] (a\i) circle (5pt);

	\node  at ($(a1)+(.6,.05)$) {$A_1$};
	\node [right] at (a2) {$A_7$};
	\node [right] at (a3) {$A_2$};
	\node [left] at (a4) {$A_5$};
	\node [left] at (a5) {$A_3$};
	\node  at ($(a6)+(.1,.4)$){$A_6$};
	\node [right] at (a7) {$A_4$};
			
\end{tikzpicture}
\end{subfigure}
	\caption{$\G_2$-mould in $\cF_3$ (in \bl{blue}).}
	\label{fig:1-6d}
\end{figure} 	
	Thus Lemma~\ref{lem:62a} yields a vertex $A_8\in V(\cF_3)$ adjacent 
	to exactly one of $A_1$, $A_2$, $A_3$, $A_7$, or $A_5$. 
	If this unique neighbour of $A_8$ is $A_1$, then Fact~\ref{f:claw}
	applied to the claw in Figure~\ref{fig:4A} yields $A_4A_8\in E(\cF_3)$ and thus 
	$A_1A_4A_8$ is a triangle in $\cF_3$, which is absurd. 
	Working with Figure~\ref{fig:4B} a similar contradiction can be obtained 
	if $A_2A_8\in E(\cF_3)$. 	
	Suppose next that $A_3A_8\in E(\cF_3)$ or $A_7A_8\in E(\cF_3)$. 
	Due to $B_6\subseteq A_3\cap A_7$ we then have $B_6\cap A_8=\vn$ 
	and, therefore, $A_8$ remains independent in $G_2$. 
	So we can apply Fact~\ref{f:claw} in $\cF_2$ to $A_8$ and the claws drawn
	in Figures~\ref{fig:4C} and~\ref{fig:4D}, thus reaching the same 
	contradiction as before. 
	Summarising this discussion, $A_8$ needs to be adjacent to $A_5$
	and none of $A_1$, $A_2$, $A_3$, $A_7$ (see Figure \ref{fig:1-6ea}). 
	
	\begin{figure}[ht]
\centering
	
\begin{subfigure}[b]{0.22\textwidth}
	\centering
	
	\begin{tikzpicture}[scale=.6]
	
	\coordinate (a1) at (-2,2);
	\coordinate (a2) at (0,2);
	\coordinate (a3) at (2,2);
	\coordinate (a4) at (0,0);
	
	\draw (a1)--(a4)--(a2)--(a4)--(a3);
	
	\foreach \i in {1,...,4}\draw [fill=red!50, thick] (a\i) circle (5pt);
	
	\node [above] at (a1) {$A_4$};
	\node [above] at (a2) {$A_5$};
	\node [above] at (a3) {$A_7$};
	\node at ($(a4)+(0,-.7)$) {$A_2$};	
	\end{tikzpicture}
\caption{}
\label{fig:4A}
\end{subfigure}
\hfill    
\begin{subfigure}[b]{0.22\textwidth}
	\centering
	
	\begin{tikzpicture}[scale=.6]
		
		\coordinate (a1) at (-2,2);
		\coordinate (a2) at (0,2);
		\coordinate (a3) at (2,2);
		\coordinate (a4) at (0,0);
		
		\draw (a1)--(a4)--(a2)--(a4)--(a3);
		
		\foreach \i in {1,...,4}\draw [fill=red!50, thick] (a\i) circle (5pt);
		
		\node [above] at (a1) {$A_3$};
		\node [above] at (a2) {$A_4$};
		\node [above] at (a3) {$A_7$};
		\node at ($(a4)+(0,-.7)$) {$A_1$};
		
	\end{tikzpicture}
	\caption{}
	\label{fig:4B}	
\end{subfigure}
\hfill
\begin{subfigure}[b]{0.22\textwidth}
	\centering
	
	\begin{tikzpicture}[scale=.6]
	
	\coordinate (a1) at (-2,2);
	\coordinate (a2) at (0,2);
	\coordinate (a3) at (2,2);
	\coordinate (a4) at (0,0);
	
	\draw (a1)--(a4)--(a2)--(a4)--(a3);
	
	\foreach \i in {1,...,4}\draw [fill=red!50, thick] (a\i) circle (5pt);
	
	\node [above] at (a1) {$A_1$};
	\node [above] at (a2) {$A_2$};
	\node [above] at (a3) {$A_6$};
	\node at ($(a4)+(0,-.7)$) {$A_7$};
	
	\end{tikzpicture}
\caption{}
	\label{fig:4C}
\end{subfigure}
\hfill    
\begin{subfigure}[b]{0.22\textwidth}
	\centering
	
	\begin{tikzpicture}[scale=.6]
		
		\coordinate (a1) at (-2,2);
		\coordinate (a2) at (0,2);
		\coordinate (a3) at (2,2);
		\coordinate (a4) at (0,0);
		
		\draw (a1)--(a4)--(a2)--(a4)--(a3);
		
		\foreach \i in {1,...,4}\draw [fill=red!50, thick] (a\i) circle (5pt);
		
		\node [above] at (a1) {$A_1$};
		\node [above] at (a2) {$A_5$};
		\node [above] at (a3) {$A_6$};
		\node at ($(a4)+(0,-.7)$) {$A_3$};
		
	\end{tikzpicture}
	\caption{}
	\label{fig:4D}
	\end{subfigure}

	\caption{Claw-moulds in $\cF_3$, $\cF_3$, $\cF_2$, and $\cF_2$.
	}
	\label{fig:4}
\end{figure} 	
	\noindent {\bf Stage E: The last two edges.}
	Returning to $\cF_3$ we apply Fact~\ref{f:edge} to
	the edge $A_5A_8$ and to the vertices $A_1$, $A_4$, $A_7$, 
	thus learning $A_4A_8\in E(\cF_3)$ (see Figure \ref{fig:1-6eb}). 
	Assume for the sake of contradiction that $B_6\cap A_8=\vn$. 
	This means that $A_8$
	stays independent in $G_2$ and by Fact~\ref{f:edge} applied in $\cF_2$
	to the edge $A_5A_8$ and the vertices $A_1$, $A_6$, $A_7$ we obtain
	$A_6A_8\in E(\cF_2)$, which in turn yields the 
	contradiction $B_6\subseteq A_8$.

	We have thereby shown $B_6\cap A_8\ne\vn$ 
	and together with $A_4\cap A_8=\vn$
	we reach $\wt{B}_6\cap A_8\ne\vn$. Thus the definition of $G_3$ reveals 
	$A_6A_8\in E(\cF_3)$ (see Figure \ref{fig:1-6ec}). 
	
	Altogether our eight vertices $A_i$ form an imprint of $\G_3$ in $\cF_3$ 
	with natural ordering $A_1A_2A_8A_3A_7A_4A_5A_6$, 
	contrary to Corollary~\ref{cor:210}. 
\end{proof}	

		\begin{figure}[ht]
	\centering

	\begin{subfigure}[b]{0.32\textwidth}
		\centering

	\begin{tikzpicture}[scale=.6]
		
	\coordinate (a1) at (90:2.5);
	\coordinate (a5) at (162:2.5);
	\coordinate (a4) at (234:2.5);
	\coordinate (a3) at (306:2.5);
	\coordinate (a2) at (18:2.5);
	\coordinate (a6) at (140:.8);
	\coordinate (a7) at (-30:.5);
	\coordinate (a8) at (220:.7);
	
	\foreach \i in {1,...,8}\fill (a\i) circle (3pt);
	
	\draw (a5)--(a6)--(a2);
	\draw (a1)--(a7) -- (a3);
	\draw (a8) -- (a4);

	\draw[blue, thick] (a1)--(a5)--(a4)--(a3)--(a2)--cycle;

		\foreach \i in {1, 2, 3, 4, 5, 7}\draw [fill=red!50, thick] (a\i) circle (5pt);

	\node  at ($(a1)+(.6,.05)$) {$A_1$};
\node [right] at (a2) {$A_7$};
\node [right] at (a3) {$A_2$};
\node [left] at (a4) {$A_5$};
\node [left] at (a5) {$A_3$};
\node  at ($(a6)+(.1,.4)$){$A_6$};
\node [right] at (a7) {$A_4$};
	\node [left] at (a8) {$A_8$};

	\end{tikzpicture}
\caption{}
\label{fig:1-6ea}

\end{subfigure}
\hfill    
\begin{subfigure}[b]{0.32\textwidth}
\centering
\begin{tikzpicture}[scale=.6]
	
	\coordinate (a1) at (90:2.5);
	\coordinate (a5) at (162:2.5);
	\coordinate (a4) at (234:2.5);
	\coordinate (a3) at (306:2.5);
	\coordinate (a2) at (18:2.5);
	\coordinate (a6) at (140:.8);
	\coordinate (a7) at (-30:.5);
	\coordinate (a8) at (220:.7);

	\foreach \i in {1,...,8}\fill (a\i) circle (3pt);

	\draw (a5)--(a6)--(a2);
	\draw (a1)--(a7) -- (a3);
	\draw (a7) -- (a8) -- (a4);

	\draw[blue, thick] (a1)--(a5)--(a4)--(a3)--(a2)--cycle;

		\foreach \i in {1, 2, 3, 4, 5, 7}\draw [fill=red!50, thick] (a\i) circle (5pt);
\node  at ($(a1)+(.6,.05)$) {$A_1$};
\node [right] at (a2) {$A_7$};
\node [right] at (a3) {$A_2$};
\node [left] at (a4) {$A_5$};
\node [left] at (a5) {$A_3$};
\node  at ($(a6)+(.1,.4)$){$A_6$};
\node [right] at (a7) {$A_4$};
\node [left] at (a8) {$A_8$};
\end{tikzpicture}
\caption{}
\label{fig:1-6eb}
\end{subfigure}
\hfill    
\begin{subfigure}[b]{0.32\textwidth}
\centering
\begin{tikzpicture}[scale=.6]
\coordinate (a1) at (90:2.5);
\coordinate (a5) at (162:2.5);
\coordinate (a4) at (234:2.5);
\coordinate (a3) at (306:2.5);
\coordinate (a2) at (18:2.5);
\coordinate (a6) at (140:.8);
\coordinate (a7) at (-30:.5);
\coordinate (a8) at (220:.7);
\foreach \i in {1,...,8}\fill (a\i) circle (3pt);
\draw (a5)--(a6)--(a2);
\draw (a1)--(a7) -- (a3);
\draw (a4) -- (a8) -- (a6);
\draw (a8) -- (a7);
\draw[blue, thick] (a1)--(a5)--(a4)--(a3)--(a2)--cycle;
\foreach \i in {1, 2, 3, 4, 5, 7}\draw [fill=red!50, thick] (a\i) circle (5pt);
\node  at ($(a1)+(.6,.05)$) {$A_1$};
\node [right] at (a2) {$A_7$};
\node [right] at (a3) {$A_2$};
\node [left] at (a4) {$A_5$};
\node [left] at (a5) {$A_3$};
\node  at ($(a6)+(.1,.4)$){$A_6$};
\node [right] at (a7) {$A_4$};
\node [left] at (a8) {$A_8$};
\end{tikzpicture}
\caption{}
\label{fig:1-6ec}
\end{subfigure}
\caption{Construction of a $\G_3$-imprint in $\cF_3$. }
\label{fig:1-6e}
\end{figure}
	 
\section{Bipartite fortresses}

The next result supersedes the previous section. 

\begin{lemma}
	If $(n, s)$ is a minimal counterexample, then all graphs in $\Ex$
	have bipartite fortresses.
\end{lemma}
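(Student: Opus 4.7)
The plan is to combine the existing results with the extension property from Corollary~\ref{cor:n}. By Fact~\ref{fact:c3} and Lemma~\ref{lem:61}, the fortress $\cF = \Fg$ of any $G \in \Ex$ contains no triangle and no $\G_2$-imprint; since any pentagon in a triangle-free graph is automatically induced (a chord would force a triangle), this already means that $\cF$ contains neither $C_3$ nor $C_5$ as a subgraph. In other words, we know \emph{a priori} that the odd girth of $\cF$ is at least~$7$, and the plan is to bootstrap this to full bipartiteness via a shortening argument based on Corollary~\ref{cor:n}.

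Arguing by contradiction, suppose $\cF$ admits an odd cycle, and let $C = X_1X_2\cdots X_{2k+1}X_1$ be a shortest such one; by the preceding paragraph, $k \ge 3$. The key step is to apply Corollary~\ref{cor:n} to the edge $X_1X_2$ and the vertex $Z = X_5$, obtaining some $T \in V(\cF)$ with $TX_5 \in E(\cF)$ and $T \in \Nn_\cF(X_1) \cup \Nn_\cF(X_2)$. A short index check---using that $C$ is an induced cycle---will rule out the possibility $T \in V(C)$: the edge $TX_5$ would force $T \in \{X_4, X_6\}$, whereas adjacency with $X_1$ or $X_2$ would force $T \in \{X_1, X_2, X_3, X_{2k+1}\}$, and these two index sets are disjoint as soon as $2k+1 \ge 7$.

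Two cases then close the argument. If $TX_2 \in E(\cF)$, then the five distinct vertices $T,X_5,X_4,X_3,X_2$ form a $5$-cycle in $\cF$, directly contradicting the $C_5$-freeness observed above. If instead $TX_1 \in E(\cF)$, then $T,X_5,X_6,\dots,X_{2k+1},X_1$ form an odd cycle of length $2k-1$ in $\cF$; this is a forbidden $C_5$ when $k = 3$, and for $k \ge 4$ it is a strictly shorter odd cycle than $C$, contradicting the minimality of $C$.

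The main delicate point I foresee is the choice of~$Z$. For the shortening to produce anything useful, $Z$ must be far enough from the edge $X_1X_2$ along $C$ that Corollary~\ref{cor:n} cannot return some vertex of~$C$ itself as~$T$; otherwise the corollary yields no new information. The choice $Z = X_5$ is the closest vertex on $C$ with this property, and it works precisely because $C$ has length at least~$7$. This is why the $C_5$-freeness coming from Lemma~\ref{lem:61} is essential to get the argument off the ground.
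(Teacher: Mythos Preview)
Your proof is correct and follows essentially the same approach as the paper: take a shortest odd cycle, apply Corollary~\ref{cor:n} to the edge $X_1X_2$ and the vertex $X_5$, and obtain a shorter odd closed walk. The paper is slightly terser---it speaks of a closed odd walk of length $2t-1$ or $5$ without explicitly verifying $T\notin V(C)$---but your more careful check that $T$ lies off the cycle is a perfectly valid (and arguably cleaner) way to reach the same contradiction.
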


\begin{proof}
	Assume contrariwise that the fortress $\cF$ of some $G\in \Ex$
	contains an odd cycle. Let $t$ be minimal such that $\cF$ contains 
	a cycle $\cC=A_1A_2\dots A_{2t+1}$ of length $2t+1$. 
	Due to Fact~\ref{fact:c3} and Lemma~\ref{lem:61} we have $t\ge 3$. 
		
	\begin{figure}[ht]
	\centering
	\begin{tikzpicture}[scale=0.62]
		
		\coordinate (a1) at (-8.5,0);
		\coordinate (a2) at (-7.5,-1.1);
		\coordinate (a3) at (-5.6,-1.5);
		\coordinate (a4) at (-3.7,-1.1);
		\coordinate (a5) at (-2.7,0);
		\coordinate (a6) at (-3.7, 1.1);
		\coordinate (a7) at (-5.6, 1.5);
		\coordinate (a8) at (-7.5, 1.1);
		\coordinate (x) at (-5.6,.4);
		
		\draw [line width =4pt, blue!25!white, rounded corners] (a8)--(a1)--(x)--(a5)--(a6) [out=150, in=30] to (a8);
						
						\node at (-5.42, 1.63) {\Huge $\dots$};
			
		\draw [thick] (a8)--(a1)--(a2)--(a3)--(a4)--(a5)--(a6);		\draw [thick] (a1) --(-5.6,.4) -- (a5);
		
			\draw [thick, red] (a1)--(a2);
			
				\fill (x) circle (3pt);
				\node at (-5.7,-.1) {$T$};
				
					\foreach \i in {1,...,6, 8}{
						\fill (a\i) circle (3pt);
					}
				\foreach \i in {1,2,5}
				\fill [red] (a\i) circle (3pt);
		
		\node at ($(a1)+(-.3,-.2)$) {$A_1$};
		\node at ($(a2)+(-.3,-.3)$) {$A_2$};
		\node at ($(a3)+(-.3,-.3)$) {$A_3$};
		\node at ($(a4)+(.3,-.3)$) {$A_4$};
		\node at ($(a5)+(.3,-.3)$) {$A_5$};
		\node at ($(a6)+(.3,.4)$) {$A_6$};
		\node at ($(a8)+(-.6,.4)$) {$A_{2t+1}$};
		
		\coordinate (b5) at (8.5,0);
		\coordinate (b4) at (7.5,-1.1);
		\coordinate (b3) at (5.6,-1.5);
		\coordinate (b2) at (3.7,-1.1);
		\coordinate (b1) at (2.7,0);
		\coordinate (b6) at (7.5, 1.1);
		\coordinate (b7) at (5.6, 1.5);
		\coordinate (b8) at (3.7, 1.1);

	\draw [line width =4pt, blue!25!white, rounded corners] (b2)--(5.6,.4)--(b5)--(b4)--(b3)--(b2);

		\fill (5.6,.4) circle (3pt);
		\node at (5.8,-.1) {$T$};
		\node at (5.6, 1.5) {\Huge $\dots$};
		
		\draw [thick] (b8)--(b1)--(b2)--(b3)--(b4)--(b5)--(b6);
		\draw [thick] (b2) --(5.6,.4) -- (b5);
		
		\draw [thick, red] (b1)--(b2);
		
			\foreach \i in {1,...,6, 8}{
				\fill (b\i) circle (3pt);
			}
			
			\foreach \i in {1,2,5}
			\fill [red] (b\i) circle (3pt);
		
		
			\node at ($(b1)+(-.3,-.2)$) {$A_1$};
		\node at ($(b2)+(-.3,-.3)$) {$A_2$};
		\node at ($(b3)+(-.3,-.3)$) {$A_3$};
		\node at ($(b4)+(.3,-.3)$) {$A_4$};
		\node at ($(b5)+(.3,-.3)$) {$A_5$};
		\node at ($(b6)+(.3,.4)$) {$A_6$};
		\node at ($(b8)+(-.6,.4)$) {$A_{2t+1}$};

	\end{tikzpicture}
	\caption{The cycle $\cC=A_1A_2\dots A_{2t+1}$ and the vertex $T$.}
	\label{fig:oddC}
\end{figure}

	Now Corollary~\ref{cor:n} applied to the edge $A_1A_2$ and the vertex $A_5$ 
	yields some $T\in V(\cF)$ such that $A_5T\in E(\cF)$ and 
	either $A_1T\in E(\cF)$ or $A_2T\in E(\cF)$. But this creates a closed walk
	in $\cF$, which passes through $T$ and has length $2t-1$ or $5$ 
	(see Figure~\ref{fig:oddC}). In both cases we reach a contradiction 
	to the minimality of $t$.
\end{proof}

The reason why being bipartite is a useful property of fortresses is that 
it leads to the existence of an edge with a remarkable property. 

\begin{lemma}\label{lem:56}
	For every minimal counterexample $(n, s)$ and every graph
	$G\in \Ex$ there exists an edge $XY\in E(\cF)$ such that 
	every $Z\in V(\cF)$ satisfies $|X\cap Z|<3s-n$ or $|Y\cap Z|<3s-n$.
\end{lemma}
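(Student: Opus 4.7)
The plan is to argue by contradiction: suppose that for every edge $XY \in E(\cF)$ there exists $Z \in V(\cF)$ with $|X \cap Z| \geq 3s-n$ and $|Y \cap Z| \geq 3s-n$. Since Lemma~\ref{lem:twosets} guarantees that $\cF$ has at least one edge, this hypothesis is substantive. I will combine bipartiteness (from the preceding lemma in this section) with Corollary~\ref{cor:n} and the symmetrisation tools of Corollary~\ref{lem:B'} and Lemma~\ref{lem:B''} to combine the bad witnesses iteratively into a forbidden configuration.

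First, I would fix an edge $XY \in E(\cF)$ together with a bad $Z$. As $|X \cap Z|, |Y \cap Z| > 0$, neither $XZ$ nor $YZ$ lies in $E(\cF)$. Applying Corollary~\ref{cor:n} to the triple $X, Y, Z$ supplies a vertex $T \in V(\cF)$ with $ZT \in E(\cF)$ and, without loss of generality, $XT \in E(\cF)$. The bipartition $V(\cF) = U \dcup W$ then places $X, Z$ in one class (say $U$) and $Y, T$ in the other class $W$. Applying the hypothesis to the new edge $XT$ yields a further bad $Z_1$; since $T \cap Z = \vn$ while $|T \cap Z_1| \geq 3s-n > 0$, this $Z_1$ differs from $Z$.

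Iterating this step — alternately invoking the hypothesis on newly discovered edges and Corollary~\ref{cor:n} on newly relevant triples — produces a growing cascade of fortress vertices and edges in $\cF$ that must respect the bipartition. Because $|V(\cF)|$ is finite the cascade must eventually close, and I expect the closure to produce one of the configurations already forbidden: (a)~an odd cycle in $\cF$, contradicting bipartiteness; (b)~a $\Gamma_2$-imprint in $\cF$, contradicting Lemma~\ref{lem:61}; or (c)~a $\Gamma_3$-imprint, contradicting Corollary~\ref{cor:210}. An attractive alternative is to use the overlap data $|X \cap Z|, |Y \cap Z| \geq 3s-n$ as a guide for selecting the set~$B$ in a symmetrisation $\GZS(G \mid X, B)$: via Lemma~\ref{lem:B''} one may promote the cascade into an explicit $\Gamma_2$-mould in some symmetrised extremal graph, which can then be upgraded to a $\Gamma_3$-imprint by the multi-stage strategy of Lemma~\ref{lem:61}.

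The main obstacle is controlling the cascade precisely enough to guarantee that a forbidden configuration must appear. Two difficulties need to be managed: Corollary~\ref{cor:n} gives two possibilities for the connecting vertex $T$ ($XT$ or $YT$), so branching on this choice must be handled; and the new bad witnesses could a priori coincide with vertices already produced. I expect that a judicious initial choice of the edge $XY$ — perhaps one supplied by a $K_2$-mould via Lemma~\ref{lem:43} (so that the mould sets $B_a \subseteq Y$ and $B_b \subseteq X$ further constrain any bad $Z$ via $B_a \cap Z = B_b \cap Z = \vn$), or one extremising a quantity such as $|\{Z : |X\cap Z|, |Y\cap Z| \geq 3s-n\}|$ — together with systematic case analysis of how bipartition classes interact at each iteration, will drive the argument through.
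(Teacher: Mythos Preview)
Your proposal is a plan, not a proof, and the plan has a real gap at its core. You correctly set up the contradiction hypothesis and the first step (use Corollary~\ref{cor:n} to locate a neighbour $T$ of the bad $Z$), but the ``cascade'' you describe does not converge on any of the targets you list. Bipartiteness and Lemma~\ref{lem:61} are facts about $\cF$ that hold unconditionally for a minimal counterexample; the extra hypothesis ``every edge has a bad $Z$'' gives you large \emph{intersections}, and nothing in your iteration converts those intersections into the \emph{disjointnesses} that a $\Gamma_2$- or $\Gamma_3$-imprint would require. The symmetrisation alternative you sketch is similarly unmotivated: a bad $Z$ overlapping $X$ and $Y$ in at least $3s-n$ vertices does not by itself place any $\psi$-type block inside $Z$, so there is no clear route to a mould.

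What the paper actually does is introduce a new forbidden configuration tailored to the overlap hypothesis. The engine is a fresh minimality inequality: adding six new vertices to $G$ --- two triples $x_iy_ic_i$ attached to sets $X_i, Y_i, C_i$ and joined by a hexagon $x_1y_1c_1x_2y_2c_2$ --- and using $\delta(n+6,s+2)=\delta-2$ forces an independent set of size $s+3$, which must pick up three new vertices violating the bipartition. This rules out any cyclic sequence $X_1Y_1,\dots,X_rY_r\in E(\cF)$ together with independent sets $C_i$ of size exceeding $4n-10s$ disjoint from $Y_i$ and $X_{i+1}$. To feed this, the paper takes a vertex $c_i\in X_i\cap Y_{i+1}$ of high degree via Lemma~\ref{lem:q} (this is where the $3s-n$ threshold enters) and sets $C_i=\mathrm{N}(c_i)$. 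With cyclic overlap sequences excluded, a short descent argument --- in spirit the ``eventual repetition'' you were aiming for --- shows that the sets $\cX'=\{X\in\cX: |X\cap Y|<3s-n\text{ for all }Y\in\cY\}$ and $\cY'$ are nonempty, and a final application of Corollary~\ref{cor:n} produces an edge between them. The hexagon-plus-$\ex(n+6,s+2)$ step is the idea your outline is missing.
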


\begin{proof}	The previous lemma informs us that $\cF$ is bipartite, say with vertex 
	classes 	$\cX$ and~$\cY$. Since $\cF$ has at least one edge 
	(by Lemma~\ref{lem:twosets}), these classes cannot be empty.
	The minimality of $(n, s)$ is used as follows.  

	\begin{claim}\label{cl:2}
		There do not exist an integer $r\ge 2$, fortress vertices 
		$X_1,X_2,\dots, X_{r}\in\cX$, $Y_1,Y_2,\dots, Y_{r}\in\cY$, 
		and independent sets $C_1,C_2,\dots, C_{r}\subseteq V(G)$ 
		such that for every $i\in \ZZ/r\ZZ$ we have 
			\begin{enumerate}[label=\rmlabel]
				\item\label{it:i1} $X_{i}Y_{i}\in E(\cF)$; 
				\item\label{it:i2} $|C_{i}| > 4n-10s$;
				\item\label{it:i3} $Y_i\cap C_{i} =X_{i+1}\cap C_i =\vn$.
			\end{enumerate}
	\end{claim}
	
	\begin{proof}
		Assume first that such a configuration exists for $r=2$.
		We construct a graph $G^+$ by adding six new 
		vertices $x_1$, $y_1$, $c_1$, $x_2$, $y_2$, $c_2$ to $G$ as well as 
		all edges from $x_i$ to $X_i$, from $y_i$ to $Y_i$, from $c_i$ to $C_i$ 
		(where $i=1,2$), and finally the hexagon $x_1y_1c_1x_2y_2c_2$ 
		(see Figure~\ref{fig:G+6}).
		
			\begin{figure}[ht]
	\centering
	\begin{tikzpicture}[scale=.8]
		
	\coordinate (a1) at (2.2, 1.9);
	\coordinate (a2) at (4.4, 1.6);
	\coordinate (a3) at (6.6, 1.5);
	\coordinate (a4) at (8.8, 1.5);
	\coordinate (a5) at (11, 1.6);
	\coordinate (a6) at (13.2, 1.9);
	
		\foreach \i in {1,...,6} {
					\coordinate (b\i) at (2.2*\i, 0);	
		}
	
		\foreach \i in {1,4} {
		\fill [red!10] (a\i)--($(b\i)+(.97,.16)$) -- ($(b\i)-(.97,-.16)$) -- cycle;
			\fill (a\i) circle (2pt);		
		\draw [red!70!black] (b\i) ellipse (1cm and .5cm);	
		\fill [red!10] (b\i) ellipse (1cm and .5cm);	
	}
	
	\foreach \i in {2, 5} {
		\fill [blue!10] (a\i)--($(b\i)+(.97,.16)$) -- ($(b\i)-(.97,-.16)$) -- cycle;
			\fill (a\i) circle (2pt);		
		\draw [blue!70!black] (b\i) ellipse (1cm and .5cm);
		\fill [blue!10] (b\i) ellipse (1cm and .5cm);	
	}

	\foreach \i in {3,6} {
	\fill [green!10!white] (a\i)--($(b\i)+(.62,.05)$) -- ($(b\i)-(.62,-.05)$) -- cycle;
		\fill (a\i) circle (2pt);		
	\draw [green!70!black,thick] (b\i) ellipse (.6cm and .3cm);	
	\fill [green!10!white] (b\i) ellipse (.6cm and .3cm);
}
	
		\draw (a1)--(a2)--(a3)--(a4)--(a5)--(a6)--(a1);
		
	\node at (b2) {$Y_1$};
	\node at (b3) {\scriptsize $C_1$};
	\node at (b6) {\scriptsize $C_2$};
	\node at (b1) {$X_1$};
	\node at (b4) {$X_2$};
	\node at (b5) {$Y_2$};
		
		\node at ($(a6)+(.35,.01)$) {\tiny $c_2$};
		\node  at ($(a1)+(-.27,.01)$) {\tiny $x_1$};

		\node at ($(a4)+(.34,-.14)$) {\tiny $x_2$};
		\node  at ($(a5)+(.34,-.14)$) {\tiny $y_2$};
		\node at ($(a2)+(-.3,-.14)$) {\tiny $y_1$};
		\node  at ($(a3)+(-.3,-.14)$) {\tiny $c_1$};

	\end{tikzpicture}
	\caption{The graph $G^+$. In reality, some of  
	$X_1$, $Y_1$, $C_1$, $X_2$, $Y_2$, $C_2$ overlap.}
	\label{fig:G+6}
\end{figure}
 			
		Due to~\ref{it:i1} and~\ref{it:i3} this graph is triangle-free. 
		Moreover, $\delta(n+6, s+2)=\delta-2$ and the minimality of $(n, s)$
		reveal 
				\begin{align*}
				\ex(n+6, s+2) &\le g_4(n+6, s+2) = g_4(n,s) + 4s + 2(4n-10s)+8\\
				&< \ex(n,s) + |X_1|+|Y_1|+|C_1|+|X_2|+|Y_2|+|C_2| + 6=e(G^+)\,,
		\end{align*}
				for which reason $G^+$ contains an independent set $Z^{+}$ of size $s+3$. 
		Clearly, $Z^+$ contains at most~$s$ old vertices and at most three new
		ones. Thus there is some $Z\in V(\cF)$ such that~$Z^+$ is either 
		$Z\cup\{x_1, y_2, c_1\}$ or $Z\cup\{x_2, y_1, c_2\}$. 
		In both cases $Z$ has a neighbour in $\cX$ and a neighbour in $\cY$, 
		which contradicts the fact that $\cX\dcup \cY$ bipartises $\cF$.
		This proves the case $r=2$ of our assertion. 
		
		Now we keep assuming that our claim fails and consider a counterexample 
		with $r$ minimum. As we have just seen, $r$ is at least $3$. 
		By Lemma~\ref{cl:1} applied to $X_1Y_1$ and $C_2$ there exists 
		a set $Z\in V(\cF)$ such that $C_2\cap Z = \vn$ and either $X_1Z$ 
		or $Y_1Z$ is an edge of~$\cF$.  
		
		Suppose first that $X_1Z\in E(\cF)$, which yields $Z\in \cY$. 
		Now the sets $X_1, X_3, \dots, X_r\in \cX$, $Z, Y_3, \dots, Y_r\in \cY$,
		and $C_2, \dots, C_r\subseteq V(G)$ contradict the minimality of $r$. 
		This proves $Y_1Z\in E(\cF)$, whence $Z\in \cX$. But now the sets 
		$Z, X_2, \in \cX$, $Y_1, Y_2\in\cY$, and $C_1, C_2\subseteq V(G)$
		yield the same contradiction.  
	\end{proof}
	
	\begin{claim}\label{cl:4}
		There do not exist $r\ge 1$, $X_1, \dots, X_r\in \cX$, and $Y_1, \dots, Y_r\in \cY$ with 
		\begin{enumerate}[label=\rmlabel]
			\item\label{it:ii1} $X_iY_i\in E(\cF)$,
			\item\label{it:ii2} $|X_i\cap Y_{i+1}|\ge 3s-n$ 
		\end{enumerate}
	for all $i\in \ZZ/r\ZZ$.
	\end{claim}
	
	\begin{proof}
		Assume contrariwise that such a situation exists. Since $X_1$ 
		is adjacent to $Y_1$ but not to $Y_2$, we have $r\ge 2$. 
		For every $i\in \ZZ/r\ZZ$ Lemma~\ref{lem:q} and~\ref{it:ii2}  
		yield a vertex $c_i\in X_i\cap Y_{i+1}$ such that the 
		cardinality of $C_i=\Nn(c_i)$ exceeds $4n-10s$. As the sets~$C_i$
		are disjoint to $X_i$ and $Y_{i+1}$, they lead to a contradiction 
		to Claim~\ref{cl:2}. 
	\end{proof}
		
	Let us now consider the sets 
		\begin{align*}
		\cX' &= \{X\in \cX\colon |X\cap Y| < 3s-n \text{ for every } Y\in\cY\} \\
		\text{and} \quad
		\cY' &= \{Y\in \cY \colon |X\cap Y| < 3s-n \text{ for every } X\in\cX\}\,.
	\end{align*}
			
	\begin{claim}\label{cl:5}
		There exists a vertex in $\cY$ all of whose neighbours are 
		in $\cX'$.
	\end{claim}
		\begin{proof}
		Recall that $\cF$ has no isolated vertices by Lemma~\ref{cl:62}. Thus the failure of our 
		claim would imply that every $Y\in\cY$ has a neighbour 
		in $\cX\setminus\cX'$. Starting with an arbitrary vertex $Y_1\in \cY$
		this allows us to construct recursively an infinite sequence  
	   $Y_1, X_1, Y_2, X_2, \dots$ such that for every~$i\in \NN$ 
	   we have
				\begin{enumerate}[label=\rmlabel]
			\item $Y_i \in \cY$, $X_i\in \cX\setminus \cX'$;
			\item $X_iY_i\in E(\cF)$;
			\item $|X_i\cap Y_{i+1}|\ge 3s-n$.
		\end{enumerate}
				
		Since $\cY$ is finite, there need to exist indices $p<q$ such that 
		$Y_p=Y_q$. But now the cyclic sequences $X_p, \dots, X_{q-1}\in\cX$ 
		and $Y_p, \dots, Y_{q-1}\in\cY$ of length $r=q-p$ contradict the 
		previous claim.
	\end{proof}
	
	As $\cF$ has no isolated vertices, Claim~\ref{cl:5} yields, in particular, 
	$\cX'\ne\vn$ and by symmetry~$\cY'$ cannot be empty either. 
	Due to the definitions of~$\cX'$ and~$\cY'$, the proof of our lemma can be 
	completed by showing that there is an edge $XY$ with $X\in \cX'$
	and $Y\in \cY'$.
	
	To this end we pick a vertex $Y_\star\in\cY$ such 
	that $\Nn_\cF(Y_\star)\subseteq \cX'$, a vertex $Y\in \cY'$, as well as 
	an arbitrary neighbour $X_\star$ of $Y$ (see Figure~\ref{fig:G+}). 
		\begin{figure}[ht]
	\centering
	\begin{tikzpicture}[scale=0.55]

		\coordinate (a1) at (-3, 2.5);
		\coordinate (a2) at (1,2.5);
		\coordinate (b1) at (-3,-2.5);
		\coordinate (b2) at (1, -2.5);

				\draw[red!75!black, line width=2pt] (-.6,2.5) ellipse (6cm and 1.2cm);
				\fill [red!02, thick] (-.6,2.5) ellipse (6cm and 1.2cm);
				
					\draw[blue!75!black, line width=2pt] (-.6,-2.5) ellipse (6cm and 1.2cm);
					\fill [blue!02, thick] (-.6,-2.5) ellipse (6cm and 1.2cm);

			\draw[red!75!black, line width=2pt] (1,2.5) ellipse (3cm and .8cm);
			\fill[red!07] (1,2.5) ellipse (3cm and .8cm);

			\draw[blue!75!black, line width=2pt] (1,-2.5) ellipse (3cm and .8cm);
			\fill[blue!07] (1,-2.5) ellipse (3cm and .8cm);

			\fill [black!15] (b1) -- (0,2.5) -- (2,2.42) -- cycle;
			
			\draw[thick] (1,2.5) ellipse (1cm and .3cm);		
			\fill[black!20] (1,2.5) ellipse (1cm and .3cm);

		\draw[dashed] (a2)--(b1);
		\draw[dashed] (a2)--(b2);
		\draw (a1)--(b2);
		\foreach \i in {a1,a2,b1,b2} \fill (\i) circle (3pt);
		
	\node at (2.8,2.5) {\large\textcolor{red!75!black}{$\cX'$}};
	\node at (2.8,-2.5) {\large\textcolor{blue!75!black}{$\cY'$}};
	\node at (-5.6,2.5) {\large\textcolor{red!75!black}{$\cX$}};
	\node at (-5.6,-2.5) {\large\textcolor{blue!75!black}{$\cY$}};
	
	\node at (-.8,2.6) {\tiny $\Nn(Y_\star)$};
	\node [above] at (a1) {\tiny $X_\star$};
	\node [left] at (a2) {\tiny $X$};
	\node [below] at (b1) {\tiny $Y_\star$};
	\node [right] at (b2) {\tiny $Y$};
			
	\end{tikzpicture}
	\caption{Obtaining the edge $XY$.}
	\label{fig:G+}
\end{figure}
 	By Corollary~\ref{cor:n} applied to the edge $X_\star Y$ and the 
	vertex $Y_\star$ there is a neighbour $X$ of~$Y_\star$ adjacent 
	to either $X_\star$ or $Y$.
	Our choice of~$Y_\star$ guarantees $X\in\cX'$ and the independence 
	of~$\cX$ yields $XX_\star\not\in E(\cF)$. Thus $XY$ is the desired edge.  
\end{proof}

Now all that still separates us from the main result are one computation 
and two symmetrisations. 
 
\begin{fact}\label{f:2redux}
	If $(n, s)$ is a minimal counterexample, then 
		\begin{equation}\label{eq:43}
			\ex(n-2,s-1) < \ex(n,s) - (2s-1)\,.
	\end{equation}
	\end{fact}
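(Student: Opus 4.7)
The overall plan is to split into two cases based on the value of $\delta=\delta(n,s)$, using minimality of $(n,s)$ whenever it produces a strong enough bound and falling back on the trivial degree bound only in the extreme boundary case. The starting observation is that $\delta(n-2,s-1)=11(s-1)-4(n-2)=\delta-3<\delta$, so the minimality hypothesis immediately yields $\ex(n-2,s-1)\le g_4(n-2,s-1)$. A direct expansion of the quadratic expression~\eqref{eq:1727} (with $k=4$) gives
\[
	g_4(n-2,s-1)=g_4(n,s)-2\delta-2s+4.
\]

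In the case $\delta\ge 2$, combining the previous display with the counterexample inequality $\ex(n,s)\ge g_4(n,s)+1$ produces
\[
	\ex(n-2,s-1)\le \ex(n,s)-(2s-1)-(2\delta-2),
\]
and $2\delta-2\ge 2$ delivers the required strict inequality with room to spare. The interesting case is $\delta=1$, where this chain is exactly tight and fails to yield strictness. Here I would invoke the trivial degree bound, namely $2\ex(n-2,s-1)\le (n-2)(s-1)$, together with the identity $2g_4(n,s)=ns-(3n-8s)\delta$ (again a routine expansion). Substituting $\delta=1$ into $2\ex(n,s)\ge 2g_4(n,s)+2$ gives $2\ex(n,s)\ge ns-3n+8s+2$, and subtracting $2(2s-1)$ reduces the desired inequality to $ns-n-2s+2\le ns-3n+4s+3$, i.e.\ to $2n\le 6s+1$. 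This is equivalent to $n\le 3s$ for integers, which in turn is an immediate consequence of the standing bound~\eqref{eq:4711}, since $4n<11s<12s$.

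The main obstacle, as in the proof of Lemma~\ref{cl:62}, is therefore the boundary case $\delta=1$: the minimality argument alone is too weak to deliver strict inequality, and one must dovetail it with the trivial bound. The only minor subtlety is integrality, but both $\ex(n-2,s-1)$ and $\ex(n,s)-(2s-1)$ are integers, so any non-strict inequality of the form $2\ex(n-2,s-1)\le 2\ex(n,s)-4s$ suffices to conclude the strict form of~\eqref{eq:43}. Once the $\delta=1$ case is closed, no further work is required.
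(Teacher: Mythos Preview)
Your proof is correct and follows essentially the same approach as the paper: split into the cases $\delta\ge 2$ (use minimality and the identity $g_4(n-2,s-1)=g_4(n,s)+8n-24s+4$) and $\delta=1$ (use the trivial bound~\eqref{eq:trivial} together with $n<3s$). There is a harmless arithmetic slip in your $\delta=1$ case---the right-hand side should read $ns-3n+4s+4$ (or $+2$, depending on whether you subtract $2(2s-1)$ or $4s$) rather than $+3$---but the resulting inequality still reduces to $n\le 3s$, so the argument goes through unchanged.
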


\begin{proof}
	For $\delta = 1$ the trivial bound~\eqref{eq:trivial} implies
	\begin{align*}
		2\ex(n-2, s-1) 
		&\le 
		(n-2)(s-1)=2g_4(n,s)+(11s-4n)(3n-8s) -n -2s +2\\
		&<
		2\ex(n,s) +2(n-3s) - 2(2s-1) 
		<2\ex(n,s) - 2(2s-1)\,.
	\end{align*}
	Otherwise $\delta \ge 2$, and $\delta(n-2, s-1)=\delta-3$ combined with
	the minimality of $(n, s)$ yields
	\begin{align*}
		\ex(n-2,s-1) 
		&\le 
		g_4(n-2,s-1) = g_4(n,s) + 8n-24s+4 \\
		&< 
		\ex(n,s) -2\delta - 2s + 4
		 < \ex(n,s) - (2s-1)\,. \qedhere
	\end{align*}
\end{proof}

\begin{proof}[Proof of Theorem~\ref{th:main}]
	If the result failed, there would exist a minimal counterexample~$(n, s)$. 
	Let $G\in\Ex$ denote an arbitrary extremal graph. 
	According to Lemma~\ref{lem:56} there exists an edge $A_1A_2\in E(\cF)$ such 
	that every $Z\in V(\cF)$ satisfies $|A_1\cap Z|<3s-n$ or $|A_2\cap Z|<3s-n$.
	
	Now two successive applications of Corollary~\ref{lem:B'} lead to sets  
	$B_1\subseteq A_2$ and $B_2\subseteq A_1$ of size $3s-n$ such that 
	the graph $G_\star$ obtained from $G$ by symmetrising first 
	$\GZS(A_1, B_1)$ and then $\GZS(A_2,B_2)$ still belongs to $\Ex$.
	
	Pick arbitrary vertices $b_i\in B_i$ and set $G^-= G_\star- \{b_1, b_2\}$. 
	Due to 
		\[
		e(G^-) = e(G) - (2s-1) = \ex(n,s) - (2s-1) 
		\overset{\eqref{eq:43}}{>} 
		\ex(n-2,s-1)
	\]
		there is an independent set $Z$ of size $s$ in $G^-$.
	But now for $i=1, 2$ the set $Z\cup \{b_{i}\}$ cannot be independent 
	in $G_\star$, which proves $Z\cap A_{i}\neq \vn$ and 
	thus $Z\cap B_i=\vn$. 		
	Consequently,~$Z$ was already independent in $G$. Moreover, the  
	sets $(Z\setminus A_i)\cup B_i$ are independent in~$G_\star$, 
	whence $|Z\cap A_i|\ge |B_i|=3s-n$. Altogether $Z$ contradicts our 
	choice of the edge~${A_1A_2\in E(\cF)}$.   	 	
\end{proof}

\subsection*{Acknowledgement} 
We would like to thank the referees for reading our work very carefully. 

\subsection*{Added in proof}
While this article was in press, we completed the next step towards proving 
Andra\'sfai's conjecture~\cite{LPR-TRT1}.

\begin{bibdiv}
\begin{biblist}
		
\bib{A}{article}{
	author={Andr\'asfai, B.},
	title={\"{U}ber ein Extremalproblem der Graphentheorie},
	journal={Acta Math. Acad. Sci. Hungar.},
	date={1962},
	number={13},
	pages={443--455},
	}

\bib{B10}{article}{
	author={Brandt, Stephan},
	title={Triangle-free graphs whose independence number equals the degree},
	journal={Discrete Mathematics},
	volume={310},
	date={2010},
	number={3},
	pages={662--669},
	issn={0012-365X},
		doi={10.1016/j.disc.2009.05.021},
}

\bib{EHSS}{article}{
   author={Erd{\H{o}}s, P.},
   author={Hajnal, A.},
   author={S{\'o}s, Vera T.},
   author={Szemer{\'e}di, E.},
   title={More results on Ramsey-Tur\'an type problems},
   journal={Combinatorica},
   volume={3},
   date={1983},
   number={1},
   pages={69--81},
   issn={0209-9683},
    doi={10.1007/BF02579342},
}

\bib{EHSSS93}{article}{
   author={Erd\H{o}s, P.},
   author={Hajnal, A.},
   author={Simonovits, M.},
   author={S\'{o}s, V. T.},
   author={Szemer\'{e}di, E.},
   title={Tur\'{a}n-Ramsey theorems and simple asymptotically extremal
   structures},
   journal={Combinatorica},
   volume={13},
   date={1993},
   number={1},
   pages={31--56},
   issn={0209-9683},
      doi={10.1007/BF01202788},
}

\bib{EHSSS97}{article}{
   author={Erd\H{o}s, P.},
   author={Hajnal, A.},
   author={Simonovits, M.},
   author={S\'{o}s, V. T.},
   author={Szemer\'{e}di, E.},
   title={Tur\'{a}n-Ramsey theorems and $K_p$-independence numbers},
   conference={
      title={Combinatorics, geometry and probability},
      address={Cambridge},
      date={1993},
   },
   book={
      publisher={Cambridge Univ. Press, Cambridge},
   },
   date={1997},
   pages={253--281},
   }

\bib{ES69}{article}{
   author={Erd{\H{o}}s, P.},
   author={S{\'o}s, Vera T.},
   title={Some remarks on Ramsey's and Tur\'an's theorem},
   conference={
      title={Combinatorial theory and its applications, II},
      address={Proc. Colloq., Balatonf\"ured},
      date={1969},
   },
   book={
      publisher={North-Holland, Amsterdam},
   },
   date={1970},
   pages={395--404},
  }

\bib{ES82}{article}{
   author={Erd\H{o}s, P.},
   author={S\'{o}s, Vera T.},
   title={On Ramsey-Tur\'{a}n type theorems for hypergraphs},
   journal={Combinatorica},
   volume={2},
   date={1982},
   number={3},
   pages={289--295},
   issn={0209-9683},
      doi={10.1007/BF02579235},
}
 
\bib{Vega}{article}{
   author={\L uczak, Tomasz},
   author={Polcyn, Joanna},
   author={Reiher, Chr.},
   title={Andr\'{a}sfai and Vega graphs in Ramsey-Tur\'{a}n theory},
   journal={Journal of Graph Theory},
   volume={98},
   date={2021},
   number={1},
   pages={57--80},
   issn={0364-9024},
    doi={10.1002/jgt.22682},
}

\bib{LPR}{article}{
	author={\L uczak, Tomasz},
	author={Polcyn, Joanna},
	author={Reiher, Chr.},
	title = {On the Ramsey-Tur\'an Density of Triangles},
		journal={Combinatorica},
	volume={42},
	date={2022},
	pages={115--136},
	doi={10.1007/s00493-021-4340-0},
}

\bib{LPR-TRT1}{article}{
   author={\L uczak, Tomasz},
   author={Polcyn, Joanna},
   author={Reiher, Chr.},
   title={Strong Brandt-Thomass\'e theorems},
   eprint={2406.10745},
   note={Submitted},
}

\bib{SS}{article}{
   author={Simonovits, Mikl{\'o}s},
   author={S{\'o}s, Vera T.},
   title={Ramsey-Tur\'an theory},
   note={Combinatorics, graph theory, algorithms and applications},
   journal={Discrete Mathematics},
   volume={229},
   date={2001},
   number={1-3},
   pages={293--340},
   issn={0012-365X},
     doi={10.1016/S0012-365X(00)00214-4},
}

\bib{MuSo}{article}{
   author={Mubayi, Dhruv},
   author={S\'{o}s, Vera T.},
   title={Explicit constructions of triple systems for Ramsey-Tur\'{a}n
   problems},
   journal={J. Graph Theory},
   volume={52},
   date={2006},
   number={3},
   pages={211--216},
   issn={0364-9024},
      doi={10.1002/jgt.20156},
}
		
\bib{Zy}{article}{
   author={Zykov, A. A.},
   title={On some properties of linear complexes},
   language={Russian},
   journal={Mat. Sbornik N.S.},
   volume={24(66)},
   date={1949},
   pages={163--188},
 }			
\end{biblist}
\end{bibdiv}
\end{document}